\theoremstyle{plain}
\newtheorem{theorem}{Theorem}[section]
\newtheorem{maintheorem}{Theorem}
\newtheorem{lemma}[theorem]{Lemma}
\newtheorem{proposition}[theorem]{Proposition}
\theoremstyle{remark}
\newtheorem{definition}{Definition}
\newtheorem{remark}[theorem]{Remark}
\numberwithin{equation}{section}
\newcommand{\NN}{{\mathbb{N}}}
\newcommand{\ZZ}{{\mathbb{Z}}}
\newcommand{\RR}{{\mathbb{R}}}
\newcommand{\EU}{{\mathbb{S}}}
\newcommand{\In}{{\text{In}}}
\newcommand{\Out}{{\text{Out}}}
\newcommand{\loc}{{\text{loc}}}
\newcommand{\dpt}{\displaystyle}
\begin{document}

\title[Strange attractors in the unfolding of a heteroclinic attractor]{``Large'' strange attractors \\ in the unfolding of a heteroclinic attractor}
\author[Alexandre Rodrigues]{Alexandre A. P. Rodrigues \\ Centro de Matem\'atica da Univ. do Porto \\ Rua do Campo Alegre, 687,  4169-007 Porto,  Portugal }
\address{Alexandre Rodrigues \\ Centro de Matem\'atica da Univ. do Porto \\ Rua do Campo Alegre, 687 \\ 4169-007 Porto \\ Portugal}
\email{alexandre.rodrigues@fc.up.pt}

\date{\today}

\thanks{AR was partially supported by CMUP (UID/MAT/00144/2019), which is funded by FCT with national (MCTES) and European structural funds through the programs FEDER, under the partnership agreement PT2020. AR also acknowledges financial support from Program INVESTIGADOR FCT (IF/00107/2015).}

\subjclass[2010]{ 34C28; 34C37; 37D05; 37D45; 37G35 \\
\emph{Keywords:} Heteroclinic bifurcations,  Bykov network, Strange attractors, Rank-one dynamics, Superstable sinks. }

\begin{abstract}
In this paper we present a  mechanism for the emergence of strange attractors in a one-parameter family of differential equations acting on a 3-dimensional sphere. When the parameter is zero, its flow exhibits an attracting heteroclinic network (Bykov network) made by two 1-dimensional connections and one 2-dimensional separatrix between hyperbolic saddles-foci with different Morse indices.  After slightly increasing  the parameter, while keeping the 1-dimensional connections unaltered, we concentrate our study in the case where the 2-dimensional invariant manifolds of the equilibria do not intersect. We will show that, for a set of parameters close enough to zero with positive Lebesgue measure, the dynamics exhibits strange attractors winding around an annulus in the phase space, supporting Sinai-Ruelle-Bowen (SRB) measures. We prove the existence of a sequence of parameter values for which the family  exhibits a superstable sink.   We also characterise the  transition from a Bykov network to a strange attractor.  
\end{abstract}

\maketitle \setcounter{tocdepth}{1}

\section{Introduction}\label{intro}

Homoclinic and heteroclinic bifurcations constitute the core of our understanding of complicated intermittent  behaviour in dynamical systems. It has started with Poincaré on the late XIX century, with subsequent contributions by the schools of Andronov, Shilnikov, Smale and Palis. These results rely on a combination of geometrical and analytical techniques used to understand the qualitative behaviour of the dynamics. 

Heteroclinic cycles and networks are flow-invariant sets that can occur robustly in dynamical systems and are frequently associated with intermittent behaviour. The rigorous analysis of the dynamics associated to the structure of the nonwandering sets close to heteroclinic networks is still a challenge. We refer to \cite{HS} for an overview of homoclinic bifurcations and for details on the dynamics near different types of heteroclinic strutures.
In this article, we establish connections between  the theory of rank-one attractors and a classical dynamical scenario related to heteroclinic attractors.
We present a mechanism that produce \emph{ ``large'' strange attractors}  in the terminology of Broer, Sim\'o and Tatjer \cite{BST98}: observable chaos which is not confined to a small portion of the phase space.

 \subsection{Strange attractors}
A compact attractor is
said to be \emph{strange} if it contains a dense orbit with at least one positive Lyapunov exponent. 
A dynamical phenomenon in a one-parameter family of maps is said to be \emph{persistent} if it occurs for a set of parameters of  positive Lebesgue measure.
Persistence of chaotic dynamics is physically relevant because it means that a given phenomenon is numerically observable with positive probability.

Strange attractors are of fundamental importance in dynamical systems; they have
been observed and recognized in many scientific disciplines  \cite{BIS, Gaspard, GH, Homb2002,  LR2015, RT71, TS1986}.  Atmospheric physics provides one of the most striking   examples of strange attractors observed in natural sciences. We address the reader to \cite{SNN95}  where the authors established the emergence of strange attractors in a low-order atmospheric circulation model. 
  Among the theoretical examples that have been studied are the Lorenz and H\'enon attractors, both of which are
closely related to suitable one-dimensional reductions.

The rigorous proof of the strange character of an invariant set is a great challenge and the proof of the persistence (in measure) of such attractors is an  involved task. 

For families of autonomous differential equations in $\RR^3$,  the persistence of strange attractors can be proved near homo or heteroclinic cycles whose first return map to a cross section exhibits a homoclinic tangency to a dissipative point \cite{{Homb2002}, LR2015, MV93, OS}.  In this paper we give a further step towards this analysis.  We provide a criterion for the existence of abundant  strange attractors (in the terminology of \cite{MV93}) near a specific heteroclinic configuration, using the \emph{theory of rank-one attractors}  developed by Q. Wang and L.-S. Young \cite{WY2001, WY, WY2003, WY2008}. This technique is quite general and may be applied other heteroclinic bifurcations with a single direction of instability.

\subsection{Rank-one attractors theory: an overview}
We briefly summarise how the theory of rank-one attractors  fits in the existing literature. 
In 1976, H\'enon \cite{He76} proposed the following two-parameter family of maps on $\RR^2$
\begin{equation}
\label{Henon}
f_{(a,b)}(x,y)=(1-ax^2+y, bx),
\end{equation}
for which numerical experiments for   $(a, b) = (1.4, 0.3)$ suggested the existence of a global attractor.  H\'enon conjectured that this dynamical system should have a strange attractor and that it might be more amenable for analysis than the Lorenz system. 

Benedicks and Carleson \cite{BC91} managed to prove that H\'enon's conjecture was true, not for the parameters $(a, b) = (1.4, 0.3)$  but for $b > 0$ small. In fact, for such small $b$-values, the map \eqref{Henon} is strongly dissipative, and may be seen as an ``unfolded'' version of the quadratic map on the interval.  It was shown that, for these values of $b$, there is a forward invariant region which  accumulates on a topological attractor that coincides with the topological closure of the unstable manifold of a  fixed point of saddle-type $p$, $\overline{W^u(p)}$. Results in \cite{BC91} state that, as long as $b > 0$ is kept sufficiently small, there is a positive Lebesgue measure set of parameters $a \in [1, 2]$ (close to $a = 2$) for which there is a dense orbit in  $\overline{W^u(p)}$ along which the derivative grows exponentially fast.  The techniques developed in \cite{BC91} promoted the emergence of several results not specific to the context of H\'enon maps, among which the work by Mora and Viana \cite{MV93} assumes a crucial importance. 

L. Mora and M. Viana \cite{MV93} proposed a \emph{renormalization scheme} that, when applied to a generic unfolding of a homoclinic tangency associated to a dissipative saddle, reveals the presence of H\'enon-like families. This means that chaotic attractors arise abundantly in  a specific dynamical scenario.
Continuing the study of the dynamical properties of H\'enon maps, Benedicks and Young \cite{BY93} developed these techniques  to obtain that every attractor occurring for suitable parameters $(a, b)$ of \eqref{Henon} supports a unique SRB measure.  Young \cite{Yo98} extended these results to dynamical systems that admit a horseshoe with infinitely many branches. In doing so, the author provided a \emph{general scheme} that unifies the proofs of these kinds of results in several dynamical situations.

The theory of rank-one maps, systematically developed by Wang and Young \cite{WY2001, WY, WY2006, WY2008}, concerns the dynamics of maps with some instability in one direction of the phase space and strong contraction in all other directions of the phase space. This theory   originated with the work of Jackobson \cite{Ja81} on the quadratic family and the  analysis of strongly dissipative H\'enon maps by Benedicks and Carleson \cite{BC91}. 

It is a comprehensive theory for a nonuniformly hyperbolic setting that is flexible enough to be applicable to concrete systems of differential equations and has experienced unprecedented
growth in the last 20 years in the context of non-autonomous systems. It provides checkable conditions that imply the existence of nonuniformly hyperbolic dynamics and  SRB measures in parametrized families $F_{\lambda}$ of dissipative embeddings in $\RR^n$ for $n \geq 2$. Roughly speaking, the theory asserts that, under certain checkable conditions, there exists a set $\Delta\subset \RR$ of values with positive Lebesgue measure such that if $\lambda \in \Delta$, then $F_\lambda$ has a strange attractor supporting a SRB measure.  
This theory has already been applied to several non-autonomous dynamical scenarios, including systems with stable foci and  limit cycles subject to pulsate drives  \cite{LWY, Ott2008, OS2010, WY, WY2003} and heteroclinic bifurcations \cite{MO15, Rodrigues2021,  WO, Wang_2016}.  Although our setting is formulated to give rigorous results, the theory can also provide justification for various mathematical statements about the strange attractors found in  \cite{CastroR2020}.

\subsection{Structure of the article}
 Motivated by the bifurcation scenario involving a Bykov attractor \cite{LR2015, LR2016}, in Section \ref{s:setting}, we enumerate the main assumptions concerning the configuration of an attracting network, preceding the presentation of the main results in Section \ref{main results}. 
 In Section \ref{s: theory}, we give a descriptive summary of the rank-one attractors theory in dimension 2, after the introduction of a \emph{Misiurewicz-type} map. 
 We also introduce some important dynamical and ergodic concepts.   
 
 The coordinates and other notation  used in the rest of the article are presented in  Section~\ref{localdyn}. 
In Sections \ref{proof Th A}, \ref{proof Th B} and \ref{Proof: ThC}, we prove the main results of this paper.  In Section  \ref{proof Th B}, we add an extra subsection with some complementary   remarks. 
Finally, in Section \ref{s:mechanism},  we relate our results with others in the literature and we point out some bifurcations in the family of vector fields, emphazising  the role of the \emph{twisting number} in the sequel.   
 
 Throughout this paper, we have endeavoured to make a self contained exposition bringing together all topics related to the proofs. We  revive some useful results from the literature;  we hope this saves the reader the trouble of going through the entire length of some referred works to achieve a complete description of the theory. 
 We have drawn illustrative figures to make the paper easily readable.

\section{Preliminaries}
\label{Definitions}
To make this paper self-contained and readable, we recall some definitions and results on heteroclinic bifurcations, adapted to our purposes. For $\varepsilon>0$ small enough and $k\geq 4$, consider the one-parameter family of $C^k$--smooth autonomous differential equations
\begin{equation}
\label{general2}
\dot{x}=f_{ \lambda}(x)\qquad x\in \EU^3  \qquad \lambda \in [0, \varepsilon] 
\end{equation}
where $\EU^3$ denotes the unit 3-dimensional sphere, endowed with the $C^k$--topology. 


\subsection{Bykov network}
\label{het_phenomena}
Suppose that $O_1$ and $O_2$ are two hyperbolic saddle-foci of \eqref{general2} with different Morse indices (dimension of the unstable manifold). We say that there is a {\em heteroclinic cycle} associated to $O_1$ and $O_2$ if 
$$
W^u(O_1)\cap W^s(O_2)\neq \emptyset\qquad \text{and}  \qquad W^u(O_2)\cap W^s(O_1)\neq \emptyset.$$
 For $i, j \in \{1,2\}$, the non-empty intersection of $W^u(O_i)$ with $W^s(O_j)$ is called a \emph{heteroclinic connection} between $O_i$ and $O_j$, and will be denoted by $[O_i \rightarrow O_j]$. Although heteroclinic cycles involving equilibria are not a generic feature within differential equations, they may be structurally stable within families of vector fields which are equivariant under the action of a compact Lie group $\mathcal{G}\subset \mathbb{O}(n)$, due to the existence of flow-invariant subspaces \cite{GH}. 
 A heteroclinic cycle between two hyperbolic saddle-foci of different Morse indices, where the invariant manifolds coincide, is called a \emph{Bykov network} \cite{Bykov00}.

\subsection{Rotational horseshoe}
\label{ss: rotational horseshoe}
Let $\mathcal{H} $ stand for the infinite annulus $\mathcal{H} = \EU^1 \times [0,1]$, endowed with the usual inner product from $\RR^2$. We denote by $Homeo^+(\mathcal{H} )$ the set of homeomorphisms of the annulus which preserve orientation.
Given a homeomorphism $f :X \rightarrow X$  and a partition of $m\in \NN\backslash\{1\}$ elements $R_0,..., R_{m-1}$ of $X\subset \mathcal{H}$, the itinerary  function $$ {\Upsilon}: X \rightarrow \{0, ..., m-1\}^\ZZ= \Sigma_m$$ is defined by $$\Upsilon(x)(j)=k\quad   \Leftrightarrow \quad f^j(x)\in R_k, \quad \text{for every} \quad j\in \ZZ.$$  
Following \cite{PPS}, we say that a compact invariant set $\Lambda \subset \mathcal{H} $ of $f \in Homeo^+(\mathcal{H} )$ is a \emph{rotational horseshoe} if it admits a finite partition $P =\{R_0, ..., R_{m-1} \}$ with $R_i$ open sets of $\Lambda$ so that:
\begin{itemize}
\item the itinerary $\Upsilon$ defines a semi-conjugacy between $f|_\Lambda$ and the full-shift $\sigma: \Sigma_m \rightarrow \Sigma_m$, that is $\Upsilon  \circ f = \sigma \circ \Upsilon$ with $\Upsilon$ continuous and onto;
\medbreak
\item for any lift $F: \RR^2 \rightarrow \RR^2$ of $f$, there exist  $k>0$ and $m$ vectors $v_0, ...,v_{m-1} \in \ZZ \times \{0\}$ so that\footnote{The usual norm of $\RR^2$ is denoted by $\|\star\|$.}:
$$
\left\| (F^n(\hat{x})-\hat{x})  - \sum_{i=0}^n v_{\Upsilon(x)(i)}\right\| <k \qquad \text{for every} \qquad  \hat{x}\in \pi^{-1}(\Lambda), \quad n\in \NN,
$$
where $\pi:\RR^2\rightarrow \mathcal{H}$ denotes the usual projection map and $\hat{x} \in \pi^{-1}(\Lambda)$ is the lift of $x$; more details in the proof of Lemma 3.1 of \cite{PPS}. 
\end{itemize}
The existence of a rotational horseshoe for a map implies positive \emph{topological entropy}  at least  $ \log m $.

\subsection{Strange attractors and SRB measures}
Following \cite{LWY},
we formalize the notion of strange attractor  for a two-parameter family $F_{(a,b)}$ defined on  $M= \EU^1 \times [0,1]$, endowed with the induced topology. Recall that, if $A\subset M$ then $\overline{A}$ is the  topological closure of $A$. 

\medbreak
Let $F_{(a,b)}$ be an embedding  such that $F_{(a,b)} (\overline{U} )\subset U$ for some non-empty open set $U\subset M$. In the present work we refer to 
$$
{\Omega} =\bigcap_{m=0}^{+\infty}  F_{({a},b)}^m(\overline{U}).
$$
as an \emph{attractor} and $U$ as its \emph{basin}. The attractor $\Omega$ is \emph{irreducible} if it cannot be written as the union of two (or more) disjoint attractors.
\medbreak
We say that $\Omega$ is a \emph{strange attractor} for $F_{({a},b)}$ if, for Lebesgue-a.e $(x,y) \in U\subset M$, the orbit of $(x,y)$ has a positive
Lyapunov exponent. In other words, denoting by $\| \star \|$ the usual norm of $\RR^2$, we have: 
$$
\lim_{n \in \NN} \quad \frac{1}{n}  \log\|D  F_{({a},b)} ^n(x,y) \|>0.
$$
 
We say that $ F_{({a},b)}$ possesses a \emph{strange attractor supporting an ergodic SRB measure} $\nu$ if: \\
\begin{enumerate}
\item $ F_{({a},b)}$ has a strange attractor,\\
\item the conditional measures of $\nu$ on unstable manifolds are equivalent to the Riemannian volume on these leaves and \\
\item  for Lebesgue-a.e. $(x,y)\in U\subset M$ and  for every continuous function $\varphi : U\rightarrow \RR$, we have:
\begin{equation}
\label{limit2}
\lim_{n\in \NN} \quad \frac{1}{n} \sum_{i=0}^{n-1} \varphi \circ  F_{({a},b)}^i(x,y) = \int  \varphi \, d\nu.
\end{equation}

\end{enumerate}
\bigbreak
We say that $F_{({a},b)}$ converges  in distribution (with respect to $\nu$) to the normal distribution if, for every ergodic SRB measure $\nu$ and every H\"older continuous function $\varphi: \Omega \rightarrow \RR$, the sequence $\left\{\varphi\left( F_{({a},b)}^i\right) : i\in \NN \right\}$ obeys a \emph{central limit theorem}; in other words, if $\int \varphi \, d \nu = 0$, then the sequence
$
\frac{1}{\sqrt{m}} \sum_{i=0}^{m-1}\varphi \circ  F_{({a},b)}^i
$
converges (in distribution with respect to $\nu$) to the normal distribution. The variance of the limiting normal distribution is strictly positive unless $\varphi \circ  F_{({a},b)} = \Psi \circ  F_{({a},b)} -\Psi$  for some $\Psi $.

\medbreak
Suppose that  $ F_{({a},b)}$ possesses a strange attractor and support a unique ergodic SRB measure $\nu$. The dynamical system $(F, \nu)$ is \emph{mixing} if it is \emph{isomorphic to a Bernoulli shift}.

\section{Setting}
\label{s:setting}
We will enumerate the main assumptions concerning the configuration of an attracting heteroclinic network.  

\subsection{The organising center}
\label{ss:oc}
For $\varepsilon>0$ small enough and $r\geq 4$, consider the one-parameter family of $C^r$--smooth differential equations
\begin{equation}
\label{general2.1}
\dot{x}=f_{\lambda}(x)\qquad x\in \EU^3 \qquad  \lambda \in [0, \varepsilon] 
\end{equation}
where $\EU^3$ denotes the unit 3-dimensional sphere, endowed with the usual $C^r$--topology. Denote by $\varphi_{\lambda}(t,x)$, $t \in \RR$, the associated flow\footnote{Since $\EU^3$ is a compact set without boundary, the solutions of \eqref{general2.1} may be extended to $\RR$.}, satisfying the following hypotheses for $\lambda=0$:

\bigbreak
\begin{enumerate}
 \item[\textbf{(P1)}] \label{B1}  There are two different equilibria, say $O_1$ and $O_2$.
 \bigbreak
 \item[\textbf{(P2)}] \label{B2} The eigenvalues  of $Df_0(X)$ are:
 \medbreak
 \begin{enumerate}
 \item[\textbf{(P2a)}] $E_1$ and $ -C_1\pm \omega_1 i $ where $C_1>E_1>0, \quad \omega_1>0$, \quad for $X=O_1$;
 \medbreak
 \item[\textbf{(P2b)}] $-C_2$ and $ E_2\pm \omega_2 i $ where $C_2> E_2>0, \quad \omega_2>0$,  \quad for $X=O_2$.
 \end{enumerate}
 \end{enumerate}

 \bigbreak
The equilibrium point $O_1$ possesses a $2$-dimen\-sional stable and $1$-dimen\-sional unstable manifold that will be denoted by $W^s(O_1)$ and $W^u(O_1)$, respectively.  Dually,   $O_2$ possesses a 1-dimen\-sional stable and $2$-dimen\-sional unstable manifold and the terminology is  $W^s(O_2)$ and $W^u(O_2)$.   For $M\subset \EU^3$, denoting by $\overline{M}$ the topological closure of $M$, we also assume that:
 
 \begin{enumerate}
  \bigbreak
  \item[\textbf{(P3)}]\label{B3} The manifolds $\overline{W^u(O_2)}$ and $\overline{ W^s(O_1)}$ coincide and   $\overline{W^u(O_2)\cap W^s(O_1)}$ consists of a two-sphere (also called the $2D$-connection). \bigbreak
\end{enumerate}

   \begin{enumerate}
\item[\textbf{(P4)}]\label{B4} There are two trajectories, say  $\gamma_1, \gamma_2$, contained in  $W^u(O_1)\cap W^s(O_2)$, one in each connected component of $\EU^3\backslash \overline{W^u(O_2)}$ (also called the $1D$-connections).
\end{enumerate}
 \bigbreak


For $\lambda=0$, the two equilibria $O_1$ and $O_2$, the 2-dimensional heteroclinic connection from $O_2$ to $O_1$ referred in \textbf{(P3)} and the two trajectories listed in  \textbf{(P4)}  build a \emph{heteroclinic network} we will denote hereafter by $\Gamma$. This network consists of two cycles and has an \emph{attracting} character \cite{LR}, this is why it will be called a \emph{Bykov\footnote{The terminology Bykov is a tribute to V. Bykov who  dedicated his latest research works to cycles similar to these ones.} attractor}. 

We say that $\Gamma\subset \EU^3$ is  \emph{asymptotically stable} if we may find an open neighbourhood $\mathcal{U}$ of the heteroclinic network $\Gamma$ having its boundary transverse to the flow of $\dot{x}=f_{0}(x)$ and such that every solution starting in $\mathcal{U}$ remains in it and is forward asymptotic to $\Gamma$. 
  
  \begin{lemma}[\cite{LR}]
\label{attractor_lemma}
The set $\Gamma$ is asymptotically stable.
\end{lemma}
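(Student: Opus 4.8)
The plan is to reduce the asymptotic stability of $\Gamma$ to a contraction property of a first-return map defined on cross-sections transverse to the network, and to show that the dissipative eigenvalue conditions in \textbf{(P2)} — namely $C_1>E_1$ and $C_2>E_2$ — force this return map to be a strong contraction toward $\Gamma$. Concretely, I would build a fundamental system of neighbourhoods of $\Gamma$ out of cylindrical blocks around $O_1$ and $O_2$ glued to flow-boxes following the connections, and exhibit one such neighbourhood $\mathcal{U}$ whose boundary is transverse to $f_0$ and which is mapped into its interior by the flow, with every orbit inside converging to $\Gamma$.

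First I would linearise the flow in a neighbourhood of each saddle-focus. Since $O_1,O_2$ are hyperbolic, a finitely smooth linearisation (Samovol's theorem; the finitely many non-resonance conditions it requires hold for generic $C_i,E_i,\omega_i$) supplies cylindrical neighbourhoods $V_1,V_2$ on which $\varphi_0$ is smoothly conjugate to its linear part. On the walls of these cylinders I would set up cross-sections: an incoming section $\In(O_i)$ transverse to the incoming manifold and an outgoing section $\Out(O_i)$ transverse to the outgoing manifold. Integrating the linear flow then yields explicit local transition maps $\In(O_i)\to\Out(O_i)$. Near $O_1$ ($2D$ stable focus, $1D$ unstable) a point entering at distance $\rho$ from $W^s(O_1)$ leaves after a time $\sim E_1^{-1}\log(1/\rho)$, so its stable coordinates are contracted by a factor of order $\rho^{\,\delta_1}$ with $\delta_1=C_1/E_1>1$, while the spiralling rate $\omega_1$ only superimposes a logarithmic rotation; the local map near $O_2$ behaves dually with exponent $\delta_2=C_2/E_2>1$. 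The transitions along the two $1D$-connections $\gamma_1,\gamma_2$ and along the $2$-sphere connection of \textbf{(P3)} are, in turn, diffeomorphisms of the relevant sections.

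Composing the two local maps with the global transitions produces a first-return map $\mathcal{R}$ to, say, a section across the $2D$-connection. Its transverse contraction is governed by the product
\begin{equation*}
\delta_1\,\delta_2=\frac{C_1}{E_1}\cdot\frac{C_2}{E_2}>1,
\end{equation*}
so $\mathcal{R}$ contracts the cross-section toward the trace of $\Gamma$. From this I would assemble $\mathcal{U}$: take the cylinders $V_1,V_2$ together with flow-boxes along $\gamma_1,\gamma_2$ and a tubular neighbourhood of the sphere, arranging the transverse walls so that the flow crosses $\partial\mathcal{U}$ strictly inward on the incoming walls while the contraction of $\mathcal{R}$ keeps the images inside. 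Forward invariance together with the uniform contraction rate $\delta_1\delta_2>1$ of the return map then give that the $\omega$-limit of every point of $\mathcal{U}$ is contained in $\Gamma$, which is exactly asymptotic stability.

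I expect the main obstacle to be geometric rather than computational: making $\partial\mathcal{U}$ genuinely transverse to the flow while respecting the saddle-focus spiralling, whose outgoing sheets wind infinitely many times around the connections, and simultaneously matching the two blocks across the degenerate $2D$-connection of \textbf{(P3)}, which is a coincidence of two-dimensional manifolds (a whole sphere) rather than a transverse crossing. One must also keep track of the two-cycle structure — the trajectories $\gamma_1,\gamma_2$ lying in the two different components of $\EU^3\setminus\overline{W^u(O_2)}$ — so that the invariant region captures both cycles at once; verifying that the spiralling images of the incoming walls remain strictly inside $\mathcal{U}$ under $\mathcal{R}$ is the technical crux on which the whole argument rests.
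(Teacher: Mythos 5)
Your plan reconstructs, in outline, the argument behind the paper's citation: the paper itself gives no proof of Lemma \ref{attractor_lemma} (it is imported from \cite{LR}), but the machinery it sets up in Section \ref{localdyn} --- local maps \eqref{local_v}--\eqref{local_w} with exponents $\delta_1=C_1/E_1>1$ and $\delta_2=C_2/E_2>1$, composing to $\eta(x,y)=(x-K_\omega\ln y,\ y^{\delta})$ with $\delta=\delta_1\delta_2>1$ --- is exactly the contraction mechanism you describe, and the stability proof in \cite{LR} runs along these lines: exit time $\sim E_1^{-1}\ln(1/\rho)$, radial contraction $\rho^{\delta_1}$, an absorbing neighbourhood glued from the cylinders, flow-boxes along $\gamma_1,\gamma_2$ and a tubular neighbourhood of the sphere, and superexponential decay of the normal coordinate under the return map. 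So the approach is the right one and matches the source.

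One step as written, however, does not cover the class for which the lemma is asserted. You invoke Samovol's smooth linearisation under non-resonance conditions that ``hold for generic $C_i,E_i,\omega_i$'', but the lemma is claimed for \emph{every} field satisfying \textbf{(P1)}--\textbf{(P5)}, and \textbf{(P2)} does not exclude resonances (e.g.\ $C_1=2E_1$); a genericity hypothesis silently weakens the statement. The repair is the one the paper itself uses: Deng's estimates \cite{Deng1} (see also \cite{OS}) yield the transition maps \eqref{local_v}--\eqref{local_w} directly, with remainders $S_i,R_i$ bounded as in \eqref{diff_res}, requiring hyperbolicity only --- your exit-time and contraction computations then go through verbatim, resonant or not. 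Two smaller points to tighten: the phrase ``uniform contraction rate $\delta_1\delta_2>1$'' conflates an exponent with a Lipschitz constant --- the normal derivative $\delta y^{\delta-1}$ of $y\mapsto y^{\delta}$ tends to $0$ as $y\to 0^+$, so what you actually get (and need) is $y^{\delta}<y$ once the section height is below $1$, hence superexponential decay of $|y_n|$; and since the invariant sphere $\overline{W^u(O_2)}$ separates $\EU^3$ into two flow-invariant components, the sign of the normal coordinate is preserved along each passage, so orbits cannot switch between the two cycles --- this removes part of the difficulty you flag in the assembly of $\mathcal{U}$, leaving the spiralling/transversality of $\partial\mathcal{U}$ as the genuine (but standard) technical task.
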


\medbreak
There are two different possibilities for the geometry of the flow around $\Gamma$,
depending on the direction trajectories turn around the 1-dimensional heteroclinic trajectories from $O_1$ to $O_2$. To make this rigorous, we need some new concepts adapted from \cite{LR2015}.
Let $V_1$
and $V_2$ be small disjoint neighbourhoods of $O_1$ and $O_2$ with disjoint boundaries $\partial V_1$ and $\partial V_2$, respectively. These neighbourhoods have been constructed with detail in Section~\ref{localdyn}.
Trajectories starting at $\partial V_1\backslash W^s(O_1)$ near $W^s(O_1)$ go into the interior of $V_1$ in positive time, then follow one of the solutions in $[O_1 \rightarrow O_2]$, go inside $V_2$,  come out at $\partial V_2$ and then return to $\partial V_1$. This trajectory is not closed since $\Gamma$ is attracting. 
Let $\mathcal{Q}$ be a piece of trajectory like this from $\partial V_1$ to $\partial V_1$. Within $\partial V_1\backslash W^s(O_1)$, join its starting point to its end point by a segment, forming a closed curve, which we call the  \emph{loop} of $\mathcal{Q}$. 
By construction, the loop of $\mathcal{Q}$ and  $\Gamma$ are disjoint closed sets. 

\begin{definition}(\cite{LR2015})
We say that the two saddle-foci $O_1$ and $O_2$ in $\Gamma$ have the same \emph{chirality} if the loop of every trajectory is linked to $\Gamma$ in the sense that the two closed sets cannot be disconnected by an isotopy. Otherwise, we say that $O_1$ and $O_2$ have different chirality.
\end{definition}
 \medbreak
 From now on, we assume the following technical condition:
\medbreak
\begin{enumerate}
\item[\textbf{(P5)}] \label{B5} The saddle-foci $O_1$ and $O_2$ have the same chirality.
\end{enumerate}
\medbreak

\subsection{Perturbing term}
When the system \eqref{general2.1} is slighlty perturbed, the  2-dimensional invariant manifolds are generically transverse (either intersecting or not), as a consequence of the Kupka-Smale Theorem. With respect to the effect of the parameter $\lambda$ on the dynamics, we assume that:
 \medbreak
\begin{enumerate}
\item[\textbf{(P6)}] \label{B5} For $ \lambda > 0$,  the two trajectories within $W^u(O_1)\cap W^s(O_2)$ persist.
\end{enumerate}

 \medbreak
\begin{enumerate}
\item[\textbf{(P7)}]\label{B7} For $\lambda > 0$, the 2-dimensional manifolds $W^u(O_2)$ and $W^s(O_1)$ do not intersect.
\end{enumerate}

 \medbreak

 and
\medbreak

\begin{enumerate}
\item[\textbf{(P8)}] \label{B8} There exist $\varepsilon, \lambda_1>0$ for which the global maps associated to  the connections $[O_1 \rightarrow  O_2]$ and $[O_2 \rightarrow  O_1]$ are given, in local coordinates, by the \emph{Identity map} and by  $$(x,y)\mapsto (x + \xi + \lambda \Phi_1(x, y),y+ \lambda \Phi_2 (x, y)) \qquad \text{for} \qquad \lambda \in [0, \lambda_1]$$ respectively, where $\xi\in \RR,$ $$\Phi_1:\EU^1 \times [-\varepsilon, \varepsilon] \rightarrow \RR, \qquad \Phi_2: \EU^1 \times [-\varepsilon, \varepsilon] \rightarrow \RR^+$$ are $C^3$--maps and $\ln(\Phi_2(x, 0))$ is a Morse function with finitely many nondegenerate critical points\footnote{In Section \ref{Proof: ThC}, we list the  critical points of $\ln(\phi_2(x,0))$ as $c^{(1)},\ldots, c^{(q)}$, for some $q\in \NN \backslash \{2\}$.}. This assumption will be detailed later in Section \ref{localdyn}.
\end{enumerate}
\medbreak

 For  $r \geq 4$, denote by  $\mathfrak{X}_{Byk}^r(\EU^3)$, the family of $C^r$--vector fields on $\EU^3$ endowed with the $C^r$--Whitney topology, satisfying Properties \textbf{(P1)--(P8)}.

\subsection{Digestive remarks about the Hypotheses}
In this subsection, we discuss the Hypotheses  \textbf{(P1)--(P8)}, pointing out that they appear in specific contexts.
\bigbreak
\begin{remark}
The full description of the bifurcations associated to the Bykov attractor is a phenomenon of codimension three \cite{Algaba, KLW, LR}.  Nevertheless,  the setting described by \textbf{(P1)--(P8)}  is natural in $\textbf{SO}(2)$--symmetry-breaking contexts \cite{Aguiar_tese, LR2016} and also in the setting of some unfoldings of the Hopf-zero singularity \cite{BIS, DIKS, Gaspard}.  Hypothesis \textbf{(P6)} corresponds to the \emph{partial symmetry-breaking} considered in Section 2.4 of \cite{LR}.
The setting described by \textbf{(P1)--(P8)} generalizes Case (4) of \cite{Rodrigues2019}.
\end{remark}
\bigbreak
\begin{remark}
An explicit example of a vector field on $\EU^3$ satisfying Properties \textbf{(P1)--(P7)} may be found in Section 4.1.3.2 of  Aguiar \cite{Aguiar_tese}. Adding a generic term to system (4.24) of  \cite{Aguiar_tese}, a 2-dimensional torus will break generically and our results of Section \ref{main results} are valid. A numerical analysis of this system has been performed in \cite{CastroR2020}.
\end{remark}
\bigbreak

\begin{remark}
In the simplest scenario for the splitting of the   sphere defined by the coincidence of the two-dimensional invariant manifolds, one observes either  heteroclinic tangles or empty intersection. The expression for the global map  associated to $[O_2 \rightarrow  O_1]$   is generic in the second case. 
 The distance between $W^u (O_2)$ and $W^s (O_1)$ in a  given cross section to $\Gamma$ may be computed using the  \emph{Melnikov integral} \cite{GH}.

  \end{remark}

\bigbreak

\begin{remark}
The analytical  expressions for the transitions maps along the connections $[O_1 \rightarrow  O_2]$ and $[O_2 \rightarrow  O_1]$ could be written as a general  \emph{Linear map}  as the one considered in \cite{Bykov00}:
$$
\left(\begin{array}{c} x  \\ y\end{array}\right) \mapsto \left(\begin{array}{cc} a  &0\\ 0&\frac{1}{a}\end{array}\right)\left(\begin{array}{c} x  \\ y\end{array}\right) ,
$$
and by  
$$
\left(\begin{array}{c} x  \\ y\end{array}\right) \mapsto \left(\begin{array}{c} \xi_1  \\ \xi_2 \end{array}\right)+ \left(\begin{array}{cc} b_{1} & b_2  \\ c_1 & c_2 \end{array}\right) \left(\begin{array}{c} x  \\ y\end{array}\right) +\lambda \left(\begin{array}{c} \Phi_1(x,y)  \\ \Phi_2(x,y)\end{array}\right) 
$$
respectively, where $a\geq 1$, $\xi_1, \xi_2, b_1, b_2, c_1, c_2 \in \RR$. For the sake of simplicity and, without loss of generality,  we restrict to the case $a=1= b_1=c_2=1$ and $\xi_2= b_2=c_1=0$. This simplifies the computations and it is not a restriction (see Section 6 of \cite{Rodrigues3}). 
\end{remark}

\bigbreak

\subsection{Constants}
\label{constants1}
For future use, we settle the following notation on the \emph{saddle-value} of $O_1$, $O_2$ and $\Gamma$:
\begin{equation}
\label{constants}
\delta_1 = \frac{C_1}{E_1 }>1, \qquad \delta_2 = \frac{C_2}{E_2 }>1, \qquad \delta=\delta_1\, \delta_2>1 
 \end{equation}
and on the \emph{twisting number} defined as:
\begin{equation}
\label{constants2}
  K_\omega= \frac{E_2 \, \omega_1+C_1\, \omega_2 }{E_1E_2}>0.
 \end{equation}

\section{Presentation of the results}\label{main results}

Let $\mathcal{T}$ be a neighborhood of the Bykov attractor $\Gamma$ that exists for $\lambda=0$.  For $\lambda_0>0$ small enough and $r\geq4$, let $\left(f_{\lambda}\right)_{\lambda \in \, ]0, \lambda_0] }$ be a one-parameter family of vector fields in $\mathfrak{X}_{Byk}^r(\EU^3)$ satisfying conditions \textbf{(P1)--(P8)}. 
\bigbreak
\begin{maintheorem}\label{thm:0}
Let $f_{\lambda} \in\mathfrak{X}_{Byk}^r(\EU^3)$ with $r\geq 4$. 
Then, there is $\tilde\varepsilon>0$ (small) such that the first return map to a given cross section $\Sigma$ to $\Gamma$ may be written (in local coordinates of $\Sigma$)   by:
$$
\mathcal{F}_{\lambda}(x,y)=\left[ x+\xi+\lambda \Phi_1(x,y) - K_\omega  \ln (y+\lambda\Phi_2(x,y)) \pmod{2\pi}, \, \, \, (y+\lambda\Phi_2(x,y))^\delta \right]+\dots
$$
where $\xi\in \RR$ and $$(x,y)\in \mathcal{D}=\{x\in \RR \pmod{2\pi}, \quad y/\tilde \varepsilon  \in [-1, 1] \quad \text{and} \quad y+\lambda\Phi_2(x,y)> 0\}$$ and the ellipses stand for asymptotically small  terms depending on $x$ and $y$ which converge to zero along with their derivatives.

\end{maintheorem}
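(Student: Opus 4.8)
The plan is to realise the first return map $\mathcal{F}_\lambda$ as a composition of two \emph{local} maps, describing the passage of the flow through linearising neighbourhoods of $O_1$ and $O_2$, with the two \emph{global} maps along the connections prescribed by \textbf{(P8)}. I take the cross section $\Sigma$ to be the exit section $\Out(O_2)$ of the saddle-focus $O_2$ (transverse to the connection $[O_2\to O_1]$), parametrised by $(x,y)$, where $x\in\RR \pmod{2\pi}$ is the angular coordinate around the $1D$-connection and $y$ is the coordinate transverse to the two-dimensional connection. The first return then factors as
$$
\mathcal{F}_\lambda \;=\; (\text{local }O_2)\,\circ\,(\text{global }[O_1\to O_2])\,\circ\,(\text{local }O_1)\,\circ\,(\text{global }[O_2\to O_1]),
$$
and the content of the statement is the bookkeeping of the constants produced along this loop.

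First I would fix linearising coordinates in neighbourhoods $V_1,V_2$ of $O_1,O_2$ as constructed in Section~\ref{localdyn}, adapted to the spiralling directions via cylindrical coordinates. Near $O_1$, with stable spiral rates $-C_1\pm\omega_1 i$ and unstable rate $E_1$, a point entering the cylinder wall at height $Y$ leaves the neighbourhood after time $t=\tfrac{1}{E_1}\ln(1/Y)$; integrating $\dot\rho=-C_1\rho$, $\dot\theta=\omega_1$, $\dot z=E_1 z$ gives the local map $(X,Y)\mapsto\bigl(Y^{\delta_1},\,X-\tfrac{\omega_1}{E_1}\ln Y\bigr)$ onto the exit disk, where $\delta_1=C_1/E_1$. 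Dually, near $O_2$, with unstable spiral rates $E_2\pm\omega_2 i$ and stable rate $-C_2$, the local map reads $(\rho,\theta)\mapsto\bigl(\theta-\tfrac{\omega_2}{E_2}\ln\rho,\,\rho^{\delta_2}\bigr)$, with $\delta_2=C_2/E_2$. The same chirality assumed in \textbf{(P5)} guarantees that the two angular contributions $\tfrac{\omega_1}{E_1}$ and $\tfrac{\omega_2}{E_2}$ are accumulated with the \emph{same} sign, while the nonlinear corrections to the linear flow contribute only terms that are flat in the entry height and so are absorbed in the ellipses.

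Then I would compose from right to left. Applying the global map of \textbf{(P8)} for $[O_2\to O_1]$ sends $(x,y)$ to the entry coordinates $(X,Y)=\bigl(x+\xi+\lambda\Phi_1(x,y),\,y+\lambda\Phi_2(x,y)\bigr)$ of $V_1$; writing $\tilde y=y+\lambda\Phi_2(x,y)$, the local passage near $O_1$ and the identity transition $[O_1\to O_2]$ carry this to $\bigl(\tilde y^{\delta_1},\,X-\tfrac{\omega_1}{E_1}\ln\tilde y\bigr)$ in the entry disk of $V_2$. The local passage near $O_2$ finally produces the angular coordinate
$$
\theta_{\mathrm{out}} = X-\frac{\omega_1}{E_1}\ln\tilde y-\frac{\omega_2}{E_2}\ln\!\bigl(\tilde y^{\delta_1}\bigr)
= x+\xi+\lambda\Phi_1(x,y)-\Bigl(\tfrac{\omega_1}{E_1}+\tfrac{C_1\omega_2}{E_1 E_2}\Bigr)\ln\tilde y,
$$
and the radial coordinate $\bigl(\tilde y^{\delta_1}\bigr)^{\delta_2}=\tilde y^{\delta_1\delta_2}$. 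Recognising $\tfrac{\omega_1}{E_1}+\tfrac{C_1\omega_2}{E_1E_2}=\tfrac{E_2\omega_1+C_1\omega_2}{E_1E_2}=K_\omega$ from \eqref{constants2} and $\delta_1\delta_2=\delta$ from \eqref{constants} yields exactly the claimed expression, the domain $\mathcal{D}$ being forced by the constraints $y/\tilde\varepsilon\in[-1,1]$ and $\tilde y>0$ (so that the logarithm is defined).

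The main obstacle is not the algebra of the constants but the rigorous control of the remainder terms hidden in the ellipses: one must certify that the true transition maps differ from the linear model only by terms that, together with their first derivatives, tend to $0$ as $y\to 0$. This is where the finite smoothness of the linearisation and the careful choice of cross sections in Section~\ref{localdyn} are essential; the estimate follows the Shilnikov-type analysis of the passage near a hyperbolic saddle-focus, and it is precisely this uniform flatness of the corrections that later permits the reduction to a one-dimensional (Misiurewicz-type) model.
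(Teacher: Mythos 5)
Your proposal is correct and follows essentially the same route as the paper: there, too, the cross section is taken as $\Sigma=\Out(O_2)$, the return map is factored as $\mathcal{F}_{\lambda}=\eta\circ\Psi_{2 \rightarrow 1}^{\lambda}$ with $\eta=\mathcal{L}_{2}\circ\Psi_{1 \rightarrow 2}\circ\mathcal{L}_{1}$ built from the Shilnikov--Deng local maps \eqref{local_v}--\eqref{local_w} and the global maps of \textbf{(P8)}, and the same bookkeeping yields $K_\omega=\frac{E_2\omega_1+C_1\omega_2}{E_1E_2}$ and $\delta=\delta_1\delta_2$. Your remainder discussion matches the paper's appeal to the estimates \eqref{diff_res}, which guarantee that the higher-order terms vanish with their derivatives as $y\to 0$ and that the approximation holds in the $C^3$--norm since $\mathcal{L}_1,\mathcal{L}_2$ are $C^{r-1}$.
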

\bigbreak

The proof of Theorem \ref{thm:0} is done in Section \ref{proof Th A} by composing local and global maps. 
Since $\delta>1$, for $\lambda$ small enough, the second component of $\mathcal{F}_{\lambda}$ is contracting and the dynamics of $\mathcal{F}_{\lambda}$ is dominated by the \emph{family of circle maps}
$$
h_a(x)= \theta + a +\xi + K_\omega \ln |\phi_2(x,0)|$$
where $\xi\in \RR$,  $x\in \EU^1$ and $ a  \sim - K_\omega \ln \mu \pmod{2\pi}$. Next result shows that, for any small smooth unfolding of $f_0$, in the $C^4$--Whitney topology, there is a sufficiently large \emph{twisting number} $K_\omega$ which forces the emergence of a strange attractor (for $\mathcal{F}_{\lambda}$ defined in Theorem  \ref{thm:0}).

\bigbreak
\begin{maintheorem}\label{thm:B}
Let $f_{\lambda} \in\mathfrak{X}_{Byk}^r(\EU^3)$ with $r\geq 4$. 
 Then there exists $K_\omega^0 > 0$ such that if $K_\omega>K_\omega^0$,   there exists a set $\tilde\Delta\subset [0, \lambda_0]$ of positive Lebesgue measure with the following properties:
  for every $\lambda \in \tilde\Delta$ the map $\mathcal{F}_\lambda$ exhibits an irreducible strange attractor that supports a unique ergodic SRB measure $\nu$. 
  The orbit of Lebesgue almost all points on the cross section $\Sigma$ has a positive Lyapunov exponent and is asymptotically distributed according to $\nu$. 
\end{maintheorem}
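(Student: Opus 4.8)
The plan is to bring the first return map $\mathcal{F}_\lambda$ of Theorem \ref{thm:0} into the framework of the rank-one attractor theory of Wang and Young summarised in Section \ref{s: theory}, and to verify its hypotheses for $K_\omega$ large. The starting observation is that, since $\delta>1$, the second coordinate of $\mathcal{F}_\lambda$ contracts at a rate tending to zero with $\lambda$; hence $\lambda$ plays simultaneously the role of the strong-contraction parameter and, through the relation $a\sim -K_\omega\ln\mu \pmod{2\pi}$, of the phase parameter of the associated family of circle maps $h_a$. The singular limit of $\mathcal{F}_\lambda$ obtained as the $y$-fibres collapse is precisely this one-dimensional family, so the problem is reduced to showing that $h_a$ is, for a suitable parameter $a^\ast$, a Misiurewicz-type map, and that the family unfolds it non-degenerately.

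First I would analyse the critical set of $h_a$. Differentiating gives $h_a'(x)=K_\omega\,(\ln\Phi_2(\cdot,0))'(x)$, so the critical points of $h_a$ coincide with the critical points $c^{(1)},\dots,c^{(q)}$ of the Morse function $\ln\Phi_2(x,0)$ postulated in \textbf{(P8)}; they are finite in number and non-degenerate, and away from any fixed neighbourhood of them $|h_a'|$ is bounded below by a constant multiple of $K_\omega$. Choosing $K_\omega>K_\omega^0$ large therefore makes $h_a$ uniformly expanding outside the critical neighbourhoods, with expansion constant as large as we wish; in this regime $h_a$ behaves like a high-degree covering map of $\EU^1$ with finitely many non-degenerate folds. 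I would then establish the Misiurewicz conditions: that for some $a^\ast$ the forward orbit of each critical value stays uniformly away from the critical set and enjoys exponential growth of the derivative (a Collet--Eckmann-type estimate). Since the expansion outside the critical neighbourhoods is of order $K_\omega$, once an orbit escapes a critical neighbourhood it expands, and a parameter-exclusion argument — the core of the rank-one theory — produces a positive-measure set of good phase parameters. The transversality requirement is easy here: $h_a$ depends on $a$ by a rigid rotation, so $\partial_a h_a\equiv 1$ and the critical values move with unit speed, yielding the required non-vanishing of the derivative of the critical orbit with respect to the parameter.

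Feeding these verifications into the rank-one theorem of Section \ref{s: theory} yields, for the two-dimensional map $\mathcal{F}_\lambda$, a positive-measure set of phase parameters carrying a strange attractor that supports a unique ergodic SRB measure with the stated Lyapunov exponent and equidistribution properties; irreducibility follows from the fact that the attractor winds around the annulus, so the circle factor is transitive and the attractor cannot be split into disjoint invariant pieces.

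The main obstacle is the last transfer step: the rank-one theory delivers a positive-measure set of admissible phase parameters $a$, but the conclusion must be stated for a positive-measure set $\tilde\Delta$ of the genuine parameter $\lambda$. This requires controlling the reparametrisation $\lambda\mapsto a(\lambda)\sim -K_\omega\ln\mu(\lambda) \pmod{2\pi}$, checking that it is piecewise a diffeomorphism with controlled distortion — here the large factor $K_\omega$ helps, since it forces $a(\lambda)$ to sweep the circle of phase parameters many times as $\lambda$ ranges over $[0,\lambda_0]$, so the preimage of the good set has positive Lebesgue measure — while simultaneously verifying that the $C^3$-convergence of $\mathcal{F}_\lambda$ to its singular limit holds uniformly as $\lambda\to 0^+$. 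Handling the coupling of the contraction parameter and the phase parameter through the single variable $\lambda$, and keeping all distortion estimates uniform in the large twisting number $K_\omega$, is the delicate technical heart of the argument.
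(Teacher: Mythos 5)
Your proposal follows essentially the same route as the paper: collapse $\mathcal{F}_\lambda$ to the singular-limit circle family $h_a(x)=x+\xi+a+K_\omega\ln\Phi_2(x,0)$, verify the Wang--Young hypotheses \textbf{(H1)--(H7)} --- where the paper obtains \textbf{(H4)}, \textbf{(H5)} and \textbf{(H7)} for large $K_\omega$ directly from the Wang--Young proposition on families $\theta\mapsto\theta+a+L\left(A(\theta)+B(\theta)\right)$ rather than re-running a parameter-exclusion argument --- and then transfer the positive-measure set of phase parameters back to $\lambda$ through $a(\lambda)\sim-K_\omega\ln\lambda \pmod{2\pi}$, which is precisely the coupling the paper resolves via the rescaling $\overline{y}=y/\lambda$ and the sequences $\lambda_{(n,a)}=k^{-1}(k(\lambda_n)+a)$, citing Section 3 of [WO] for the measure transfer you rightly flag as the delicate step. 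One small correction: $h_a'(x)=1+K_\omega\,(\ln\Phi_2(\cdot,0))'(x)$, so the critical points of $h_a$ are only $O(1/K_\omega)$-close to the critical points of $\ln\Phi_2(x,0)$ rather than coinciding with them --- harmless here, since their finiteness and nondegeneracy persist for $K_\omega$ large.
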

\bigbreak
The  $\mathcal{F}_\lambda$--iterates of almost all points on the cross section $\Sigma$ winds around an annulus (as discussed in Section \ref{s:mechanism}), justifying the title of this manuscript on \emph{``large'' strange attractors} according to the terminology of \cite{BST98}.  The dynamics are genuinely non-uniformly hyperbolic, a central limit theorem holds and correlations decay at an exponencial rate \cite{WY}.
The proof of Theorem \ref{thm:B} is performed  in Section \ref{proof Th B} by reducing the dynamics of the 2-dimensional first return map to the dynamics of a one-dimensional map, via the theory of \emph{rank-one attractors} described in Section \ref{s: theory}.  
\bigbreak

Following the reasoning  of \cite{Homb2002}, the next novelty   is the existence of a sequence of parameters converging to zero for which the flow of \eqref{general2.1}  exhibits a \emph{superstable 2-periodic    orbit} (\emph{i.e.} the   map $\mathcal{F}_{\lambda}$ has a 2-periodic point which is \emph{critical}).

 \bigbreak
\begin{maintheorem}\label{thm:C}
Let $f_{\lambda} \in\mathfrak{X}_{Byk}^r(\EU^3)$ with $r\geq 4$. There exists $K_\omega^0 > 0$  and a sequence $(\lambda_n)_{n\in \NN}$ converging to $0$ such that, if  $K_\omega>K_\omega^0$ then the flow of $\dot{x}=f_{\lambda_n}(x)$   exhibits a superstable 2-periodic    orbit.
 \end{maintheorem}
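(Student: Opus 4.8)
The plan is to exploit the one-dimensional reduction already established in Theorem \ref{thm:B}. Since $\delta>1$, for $\lambda$ small the second coordinate of $\mathcal{F}_\lambda$ is strongly contracting, so the asymptotic dynamics is governed by the singular limit family of circle maps
\[
h_a(x)= x + a + \xi + K_\omega \ln|\phi_2(x,0)| \pmod{2\pi},
\qquad a \sim -K_\omega \ln\mu,
\]
whose critical points are precisely the nondegenerate critical points $c^{(1)},\dots,c^{(q)}$ of $\ln(\Phi_2(x,0))$ guaranteed by \textbf{(P8)}. A critical point of $\mathcal{F}_\lambda$ is a point where the derivative is degenerate in the expanding direction, i.e.\ a point whose first coordinate sits at such a $c^{(j)}$; a superstable $2$-periodic orbit is then a $2$-cycle of $\mathcal{F}_\lambda$ one of whose points is critical, so that the Lyapunov multiplier of the cycle vanishes.

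First I would set up the singular one-dimensional map and locate its critical points, translating the statement ``$\mathcal{F}_\lambda$ has a critical $2$-periodic point'' into an equation on the circle map $h_a$. Concretely, I would look for a parameter value at which a critical point $c^{(j)}$ maps under $h_a$ to a point $x_1$ which in turn maps back to $c^{(j)}$, i.e.\ $h_a^2(c^{(j)})=c^{(j)}$ with $c^{(j)}$ critical — the second-iterate analogue of the superstable condition. The free parameter is the additive phase $a$, which through $a\sim -K_\omega\ln\mu$ depends monotonically and unboundedly on $\lambda$ as $\lambda\to 0^+$ (here $\mu$ is the small quantity, tending to $0$, built from the contraction along the connection). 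Because $a$ runs through arbitrarily large intervals modulo $2\pi$ as $\lambda\downarrow 0$, the phase sweeps the full circle infinitely often, and by a continuity/intermediate-value argument the superstability equation $h_a^2(c^{(j)})=c^{(j)}$ is solved for a sequence of phases $a_n$, hence for a sequence $\lambda_n\to 0$.

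The second stage is to promote this solution from the singular limit back to the genuine two-dimensional map $\mathcal{F}_\lambda$. Here I would invoke the contraction in the $y$-direction together with an implicit function/graph-transform argument: for $\lambda$ small the true return map is a small $C^3$ perturbation of the singular map, its attracting invariant curve is $C^1$-close to $\{y=0\}$, and the critical locus and the $2$-periodic condition persist. The hypothesis $K_\omega>K_\omega^0$ ensures, exactly as in Theorem \ref{thm:B}, that the one-dimensional map is genuinely in the Misiurewicz/unimodal-type regime where critical points behave well, so that the superstable condition is nondegenerate and survives perturbation. This follows the reasoning of \cite{Homb2002}, where an analogous sequence of superstable sinks is produced in a homoclinic setting.

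The main obstacle I anticipate is the transfer from the one-dimensional skeleton to the full planar map, namely showing that the critical $2$-cycle of $h_a$ genuinely lifts to a superstable $2$-periodic orbit of $\mathcal{F}_\lambda$ rather than merely an approximate one. This requires controlling the ``$+\dots$'' error terms in Theorem \ref{thm:0} (the asymptotically small terms which vanish with their derivatives) uniformly as $\lambda\to 0$, and verifying that the vanishing of the derivative in the expanding direction, combined with the contraction $\delta>1$ in the transverse direction, yields a genuinely critical (zero-multiplier) periodic point of the two-dimensional map. Quantifying the monotone-unbounded dependence $a(\lambda)\sim -K_\omega\ln\mu(\lambda)$ and checking that the solutions $\lambda_n$ accumulate at $0$ rather than stalling is the remaining bookkeeping, but I expect it to be routine once the phase is shown to sweep the circle as $\lambda\to 0^+$.
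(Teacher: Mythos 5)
You reach the statement by a genuinely different route from the paper. The paper never solves a superstability equation directly: starting from the Misiurewicz parameter $a^\star$ produced in the proof of Theorem \ref{thm:B}, it introduces the set $\Delta(\lambda,\alpha)$ of parameters satisfying the Collet--Eckmann conditions \textbf{(CE1)}--\textbf{(CE2)}, invokes Lemma \ref{abundance1} (adapted from \cite{WY2006}) to the effect that $a^\star$ is a Lebesgue density point of $\Delta(\lambda,\alpha)$, and then applies Proposition \ref{OW_review} (adapted from \cite{OW10}): under the Misiurewicz property, the covering condition $[0,2\pi]\subset h_{a^\star}([c^{(i)},c^{(i+1)}])$ and an expansion bound --- exactly what $K_\omega>K_\omega^0$ buys via Proposition \ref{Prop2.1WY} --- every $\hat a\in\Delta(\lambda,\alpha)$ close to $a^\star$ is accumulated by parameters $a_n$ at which $h_{a_n}$ has a superstable periodic orbit; the step back to the flow is the same phase bookkeeping you use, $\lambda_n=\exp\bigl((a_n-2n\pi)/K_\omega\bigr)\to 0$. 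Your intermediate-value scheme replaces this machinery by an elementary degree argument in the phase $a$, and it can be made to work: the lift of $a\mapsto h_a^2(c)$ increases by $4\pi$ when $a$ increases by $2\pi$ (degree two), while $a\mapsto h_a(c)$ has degree one, so solutions of $h_a^2(c)=c$ exist in every period of the phase and hence along a sequence $\lambda_n\to 0$. The trade-off: the paper's route places the superstable sinks in the closure of the Collet--Eckmann (nonuniformly expanding) parameters near $a^\star$, i.e.\ intertwined with the strange-attractor parameters of Theorem \ref{thm:B}, and it recycles hypotheses already verified there; your route is self-contained, needs $K_\omega$ large only so that $h_a$ has critical points and nondegenerate crossings, and yields the period-$2$ claim directly, whereas in the paper the period $2$ is read off the geometry of the big lobe (Figure \ref{periodic_orbit}) rather than from the statement of Proposition \ref{OW_review}.

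Two caveats you should repair. First, your intermediate-value step does not by itself exclude that the solution of $h_a^2(c)=c$ is a fixed point, which would give a $1$-periodic, not $2$-periodic, orbit; this is fixable by the degree count above: at a parameter $a_0$ with $h_{a_0}(c)=c$ one has $\frac{d}{da}h_a^2(c)\big|_{a_0}=1+h_{a_0}'\bigl(h_{a_0}(c)\bigr)=1$ since $h_{a_0}'(c)=0$, so the fixed-point parameters absorb only degree one of the available degree two, leaving at least one genuine superstable $2$-cycle per period of the phase. Second, your claim that ``the Lyapunov multiplier of the cycle vanishes'' for the planar map cannot hold literally: for $\lambda>0$ the first return map $\mathcal{F}_\lambda$ is an embedding (a return map of a flow), so $D\mathcal{F}_\lambda^2$ is everywhere invertible; superstability can only be meant in the singular-limit/rank-one sense of a periodic orbit through the critical locus, which is in fact how the paper itself concludes --- it establishes the superstable orbit for $h_{a_n}$ and asserts it for the flow at $\lambda_n$ without carrying out the implicit-function lifting you outline. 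So your acknowledged ``main obstacle'' is real, but the paper does not resolve it either; your IFT sketch, restricted to the critical curve $\{\partial_x\mathcal{F}_\lambda^1=0\}$ with the transversality $h_{a_n}'\bigl(h_{a_n}(c)\bigr)\neq -1$, would if completed make the statement more precise than the paper's own argument.
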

Each one one these sinks persist within intervals around $\lambda=\lambda_n$, $n\in \NN$. 
 The proof of Theorem \ref{thm:C} is performed in Section \ref{Proof: ThC}.

\medbreak

\section{Theory of rank-one maps}
\label{s: theory}

We gather in this section a collection of technical facts used repeatedly in later sections. In what follows, let us denote by $C^2(\EU^1,\RR) $ the set of $C^2$--maps from $\EU^1$ (unit circle) to $\RR$. For $h\in  C^2(\EU^1,\RR) $, let 
$$  C(h)= \{x \in \EU^1 : h'(x) = 0\}$$ 
be the \emph{critical set} of $h$. For $\delta>0$, let $C_\delta$ be the $\delta$--neighbourhood of $C(h)$ in $\EU^1$ and let $C_\delta(c)$ be the $\delta$--neighbourhood of $c\in C(h)$. The terminology \emph{dist} denotes the euclidian  metric on $\RR$.

\subsection{Misiurewicz-type map}
\label{Misiurewicz-type map}
We say that $h\in  C^2(\EU^1,\RR) $ is a \emph{Misiurewicz-type map}   if the following assertions hold:
\bigbreak
\begin{enumerate}
\item There exists $\delta_0>0$ such that: \\
\begin{enumerate}
\item $\forall x \in C_{\delta_0}$, we have $h''(x)\neq 0$ and\\
\item $\forall c\in C(h)$ and $n\in \ZZ^+$, $dist(h^n(c), C(h))\geq \delta_0$.\\
\end{enumerate}

\bigbreak

\item There exist constants $b_0, \lambda_0 \in \RR^+$ such that for all $\delta<\delta_0$ and $n\in \NN$, we have: \\
\begin{enumerate}
\item if $h^k(x)\notin C_\delta$ for $k\in\{0, ,..., n-1\}$, then $|(h^n)'(x)|\geq b_0\,  \delta\,  \exp(\lambda_0\, n)$.\\
\item  if $h^k(x)\notin C_\delta$ for $k\in\{0, ,..., n-1\}$ and $h^n(x)\in C_{\delta_0} $, then $|(h^n)'(x)|\geq b_0\,     \exp(\lambda_0\, n)$.\\
\end{enumerate}
\bigbreak
\end{enumerate}

For $\delta>0$, the set  $\EU^1$  may be divided into two regions: $C_{\delta}$ and $\EU^1 \backslash C_{\delta}$. In $\EU^1 \backslash C_{\delta}$, $h$ is essentially uniformly expanding; in $C_{\delta}\backslash C$,  although $|h '(x)|$ is small, the orbit of $x$ does not return to $C_{\delta}$   until its derivative has regained an amount of exponential growth. This kind of maps is a slight generalisation of those studied by Misiurewicz \cite{M}.

 \begin{figure}[h]
\begin{center}
\includegraphics[height=7cm]{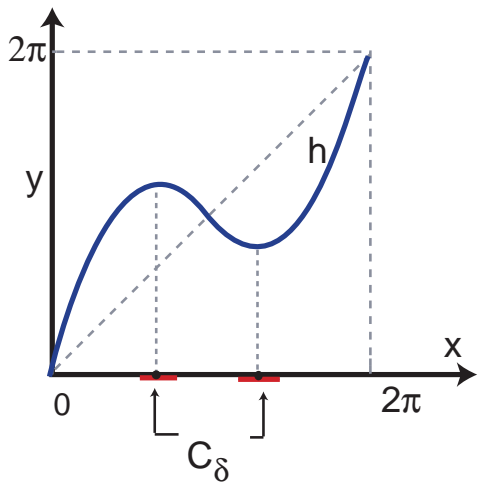}
\end{center}
\caption{\small  Example of a Misiurewicz-type map $h:\EU^1 \rightarrow \RR$. For $\delta>0$, the set  $C_{\delta}$  is a neighbourhood of the set of critical points $C$.  These maps are among the simplest examples with non-uniform expansion. }
\label{misiurewicz1}
\end{figure}

\subsection{Rank-one maps}
\label{rank_one}

Let $M= \EU^1 \times [0,1]$, induced with the usual topology. We consider the two-parameter family of maps $F_{(a,b)}: M \rightarrow M$, where $a \in[0, 2\pi]$ where $0$ and $2\pi$ are identified, and $b \in \RR$ is a scalar. Let $B_0 \subset \RR\backslash\{0\}$ with  0 as an accumulation point. Rank-one theory asks   the following hypotheses:
\bigbreak

\begin{description}
\item[\text{(H1) Regularity conditions}]  
\begin{enumerate}  
\item For each $b\in B_0$, the function $(x,y,a)\mapsto F_{(a,b)}$ is at least $C^3$--smooth.
\item Each map $F_{(a,b)}$ is an embedding of $M$ into itself.
\item There exists $k\in \RR^+$ independent of $a$ and $b$ such that for all $a \in  [0,2\pi]$, $b\in B_0$ and $(x_1, y_1), (x_2, y_2) \in M$, we have:
$$
\frac{|\det DF_{(a,b)}(x_1, y_1)|}{|\det DF_{(a,b)}(x_2, y_2)|} \leq k.
$$
\end{enumerate}

\bigbreak
\item[\text{(H2) Existence of a singular limit}] 
For $a\in [0,2\pi]$, there exists a map $$F_{(a,0)}: M \rightarrow \EU^1 \times \{0\}$$ such that the following property holds: for every $(x, y) \in M$ and $a\in [0,2\pi]$, we have
$$
\lim_{b \rightarrow 0} F_{(a,b)}(x,y) = F_{(a,0)}(x,y).
$$

\bigbreak
\item[\text{ (H3) $C^3$--convergence to the singular limit}]   For every choice of $a\in [0,2\pi]$, the maps $(x,y,a) \mapsto F_{(a,b)}$  converge in the $C^3$--topology to $(x,y,a) \mapsto F_{(a,0)}$ on $M\times [0,2\pi]$ as $b$ goes to zero.

\bigbreak
\item[\text{(H4) Existence of a sufficiently expanding map within the singular limit}]

The\-re exists $a^\star \in [0, 2\pi]$  such that $h_{a^\star}(x)\equiv F_{(a^\star, 0)}(x,0)$ is a Misiurewicz-type map (see Subsection \ref{Misiurewicz-type map}).

\bigbreak
\item[\text{(H5) Parameter transversality}] Let $  C_{a^\star}$ denote the critical set of a Misiurewicz-type map $h_{a^\star}$.
For each $x\in C_{a^\star}$, let $p = h_{a^\star}(x)$, and let $ {x(\tilde{a})}$ and $ {p(\tilde{a})}$ denote the 
continuations of $x$ and $p$, respectively, as the parameter $a$ varies around $a^\star$. The point $ {p(\tilde{a})}$  is the unique point such that $ {p(\tilde{a})}$  and $p$ have identical symbolic itineraries under $h_{a^\star}$ and $h_{\tilde{a}}$, respectively. We have:
$$
\frac{d}{da} h_{\tilde{a}}(x(\widetilde{a}))|_{a=a^\star} \neq \frac{d}{da} p(\tilde{a})|_{a=a^\star}.
$$

\bigbreak

\item[\text{(H6) Nondegeneracy at turns}] For each $x\in C_{a^\star}$, we have
$$
\frac{d}{dy} F_{(a^\star,0)}(x,y) |_{y=0} \neq 0.
$$

\bigbreak

\item[\text{(H7) Conditions for mixing}] If $J_1, \ldots, J_r$ are the intervals of monotonicity of a Misiu\-rewicz-type map $h_{a^\star}$, then:
\medbreak
\begin{enumerate}
\item $\exp(\lambda_0/3)>2$ (see the meaning of $\lambda_0$ in Subsection \ref{Misiurewicz-type map}) and
\medbreak
\item if $Q=(q_{im})$ is the matrix of all possible transitions  defined by:
\begin{equation*}
\left\{
\begin{array}{l}
1 \qquad \text{if} \qquad J_m\subset h_{a^\star} (J_i)\\  
0 \qquad \text{otherwise},\\
\end{array}
\right.
\end{equation*}
then there exists $N\in \NN$ such that $Q^N>0$ (\emph{i.e.} all entries of the matrix $Q^N$, endowed with the usual product,  are positive).
\bigbreak
\end{enumerate}

\end{description}

\begin{definition}
\label{terminologia1}
Identifying $\EU^1 \times \{0\}$ with $\EU^1$, we refer to $F_{(a,0)}$  the restriction $h_a : \EU^1 \rightarrow \EU^1$ defined by $h_a(x) = F_{(a,0)}(x,0)$ as the \emph{singular limit} of $F_{(a,b)}$ (see \textbf{(H4)}).
\end{definition}


\subsection{Q. Wang and L.-S. Young's reduction}
For attractors with strong dissipation and one direction of instability, Wang and Young conditions \textbf{(H1)--(H7)} are relatively simple and checkable; when satisfied, they guarantee the existence of  strange attractors with a package of statistical and geometric properties as follows:

\begin{theorem}[\cite{WY}, adapted]
\label{th_review}
Suppose the family $F_{(a,b)}$ satisfies \textbf{(H1)--(H7)}. Then, for all sufficiently small $b\in  B_0$, there exists a subset $\Delta \subset  [0, 2\pi]$ with positive Lebesgue measure such that for $a\in \Delta$, the map $F_{({a},b)}$ admits an irreducible strange attractor $\tilde{\Omega}\subset \Omega$  that supports a unique ergodic SRB measure $\nu$. The orbit of Lebesgue almost all points in $\tilde{\Omega} $ has positive Lyapunov exponent  and is asymptotically distributed according to $\nu$. 
\end{theorem}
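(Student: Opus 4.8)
The statement is essentially the central theorem of the Wang--Young rank-one theory, so the plan is to reconstruct their argument, organised around the singular limit $b=0$ and a controlled passage to $b>0$ small. First I would exploit \textbf{(H4)}: at $b=0$ the dynamics collapses onto the circle map $h_{a^\star}=F_{(a^\star,0)}(\cdot,0)$, which is Misiurewicz-type and hence nonuniformly expanding, with the quantitative estimates built into that definition. Taking these as a seed, I would run a Benedicks--Carleson parameter-exclusion scheme inside the one-parameter family $\{h_a\}$: track the forward orbits of each critical point $c\in C(h_{a^\star})$, impose at every step the free-return and exponential-growth conditions $|(h_a^n)'(h_a(c))|\geq b_0\,e^{\lambda_0 n}$ together with the basic assumption that critical orbits stay outside $C_\delta$, and estimate that the parameters failing these conditions at stage $n$ form a set of exponentially small measure. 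Hypothesis \textbf{(H5)} is precisely the transversality that makes the parameter-derivative of a critical value separate from its space-derivative, so the exclusion is nondegenerate and leaves a positive-measure Cantor set of one-dimensional good parameters.

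Next I would promote this picture to $b>0$. By \textbf{(H1)}--\textbf{(H3)} the map $F_{(a,b)}$ is a strongly dissipative $C^3$-perturbation of the singular limit converging to it together with derivatives, so the one-dimensional estimates should survive in a neighbourhood. The genuine difficulty is that the finite one-dimensional critical set must be replaced by a two-dimensional critical structure on the attractor. Following Benedicks--Carleson and Mora--Viana, I would construct critical points inductively as limits of the most contracted directions along admissible (almost horizontal) curves, and use \textbf{(H6)} to guarantee a true quadratic turn in the $y$-direction at each of them, so that the quadratic-like one-dimensional analysis carries over. Orbits are then split into bound and free segments relative to this critical set, the bound periods being controlled by comparison with $h_a$, and the derivative-growth estimates propagated along them.

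With the critical set in hand I would repeat the exclusion argument in the genuinely two-dimensional setting, discarding parameters for which some critical orbit makes a too-close return to the critical set; the measure bounds are inherited from the one-dimensional case thanks to $C^3$-closeness and the bounded-distortion control furnished by the Jacobian condition in \textbf{(H1)}. This produces, for each fixed small $b$, a positive-measure set $\Delta=\Delta(b)\subset[0,2\pi]$ of parameters along which $F_{(a,b)}$ is nonuniformly hyperbolic with a single unstable direction. For such $a$ the attractor $\tilde\Omega$, the closure of an unstable manifold inside $\Omega$, is irreducible and supports an SRB measure $\nu$, which I would build through a Young-tower representation of the dynamics as a horseshoe with infinitely many branches and variable return times. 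Uniqueness and ergodicity of $\nu$, positivity of the Lyapunov exponent for Lebesgue-a.e. point of $\tilde\Omega$, and the asymptotic distribution statement all follow from the standard spectral and ergodic properties of such towers.

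The hard part will be the passage of the last two steps, namely transferring the clean Misiurewicz-type one-dimensional picture into two dimensions: constructing and controlling the two-dimensional critical set and showing that the total measure of the excluded parameters remains summably small, so that $\Delta$ is not emptied out. This is the technical heart of the Wang--Young (and, before them, Benedicks--Carleson) machinery. In the present application it is not redone from scratch; once \textbf{(H1)}--\textbf{(H7)} have been verified for $F_{(a,b)}$, the conclusion is obtained by quoting their theorem, which is exactly the content of the statement above.
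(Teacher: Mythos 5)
Your proposal is correct and matches the paper's treatment: the paper does not prove this statement at all, but quotes it as an adapted result from Wang--Young \cite{WY}, which is precisely the conclusion you reach in your final paragraph. Your intermediate sketch of the Wang--Young machinery (Benedicks--Carleson parameter exclusion seeded by the Misiurewicz limit via \textbf{(H4)}--\textbf{(H5)}, the inductively constructed two-dimensional critical set with quadratic turns from \textbf{(H6)}, and the SRB measure via tower constructions) is a faithful summary of the cited proof, so nothing further is needed.
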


In contrast to earlier results, the theory in \cite{WY2001, WY, WY2003} is
generic, in the sense that the conditions under which it holds rely only to certain
general characteristics of the maps and not to specific formulas or contexts. 

\medbreak

\section{Local and transition maps}\label{localdyn}

In this section we will analyze the dynamics near the network $\Gamma$ through local maps, after selecting appropriate coordinates in neighborhoods of the saddle-foci $O_1$ and $O_2$.

\subsection{Local coordinates}
\label{ss:lc}
In order to describe the dynamics around the Bykov cycles of $\Gamma$, we use the local coordinates near the equilibria $O_1$ and $O_2$ introduced in  \cite{LR2016}. See also  Ovsyannikov and Shilnikov \cite{OS}.

\medbreak

In these coordinates, we  consider cylindrical neighbourhoods  $V_1$ and $V_2$  in ${\RR}^3$ of $O_1 $ and $O_2$, respectively, of radius $\rho=\varepsilon>0$ and height $z=2\varepsilon$.
After a linear rescaling of the variables, we  may also assume that  $\varepsilon=1$.
Their boundaries consist of three components: the cylinder wall parametrised by $x\in \RR\pmod{2\pi}$ and $|y|\leq 1$ with the usual cover $$ (x,y)\mapsto (1 ,x,y)=(\rho ,\theta ,z)$$ and two discs, the top and bottom of the cylinder. We take polar coverings of these disks $$(r,\varphi )\mapsto (r,\varphi , \pm 1)=(\rho ,\theta ,z)$$
where $0\leq r\leq 1$ and $\varphi \in \RR\pmod{2\pi}$.
The local stable manifold of $O_1$, $W^s(O_1)$, corresponds to the circle parametrised by $ y=0$. In $V_1$ we use the following terminology:
\begin{itemize}
\item
$\In(O_1)$, the cylinder wall of $V_1$,  consisting of points that go inside $V_1$ in positive time;
\item
$\Out(O_1)$, the top and bottom of $V_1$,  consisting of points that go outside $V_1$ in positive time.
\end{itemize}
We denote by $\In^+(O_1)$ the upper part of the cylinder, parametrised by $(x,y)$, $y\in[0,1]$ and by $\In^-(O_1)$ its lower part.

\medbreak
The cross-sections obtained for the linearisation around $O_2$ are dual to these. The set $W^s(O_2)$ is the $z$-axis intersecting the top and bottom of the cylinder $V_2$ at the origin of its coordinates. The set 
$W^u(O_2)$ is parametrised by $z=0$, and we use:


\begin{itemize}
\item
$\In(O_2)$, the top and bottom of $V_2$,  consisting of points that go inside $V_2$ in positive time;
\item
$\Out(O_2)$,  the cylinder wall  of $V_2$,  consisting of points that go inside $V_2$ in negative time, with $\Out^+(O_2)$ denoting its upper part, parametrised by $(x,y)$, $y\in[0,1]$ and $\Out^-(O_2)$  its lower part.
\end{itemize}

We will denote by $W^u_{\loc}(O_2)$ the portion of $W^u(O_2)$  that goes from $O_2$ up to $\In(O_1)$ not intersecting the interior of $V$ and by $W^s_{\loc}(O_1)$  the portion of $W^s(O_1)$ outside $V_2$ that goes directly  from $\Out(O_2)$ into $O_1$. The flow is transverse to these cross-sections and the boundaries of $V_1$ and of $V_2$ may be written as the closure of  $\In(O_1) \cup \Out (O_1)$ and  $\In(O_2) \cup \Out (O_2)$, respectively. The orientation of the angular coordinate near $O_2$ is chosen to be compatible with that the direction induced by Hypothesis \textbf{(P4)} and  \textbf{(P5)}.

 \bigbreak

\subsection{Local maps near the saddle-foci}
Following \cite{Deng1}, the trajectory of  a point $(x,y)$ with $y>0$ in $\In^+(O_1)$, leaves $V_1$ at
 $\Out(O_1)$ at
\begin{equation}
\mathcal{L}_{1 }(x,y)=\left(y^{\delta_1} + S_1(x,y;  \lambda),-\frac{\omega_1 \, \ln y}{E_1}+x+S_2(x,y;  \lambda) \right)=(r,\varphi),
\label{local_v}
\end{equation}
where $S_1$ and $S_2$ are smooth functions which depend on $\lambda$ and satisfy:
\begin{equation}
\label{diff_res}
\left| \frac{\partial^{k+l+m}}{\partial x^k \partial y^l  \partial \lambda ^m } S_i(x, y; \lambda)
\right| \leq C y^{\delta_1 + \sigma - l},
\end{equation}
and $C$ and $\sigma$ are positive constants and $k, l, m$ are non-negative integers. Similarly, a point $(r,\varphi)$ in $\In(O_2) \backslash W^s_{\loc}(O_2)$, leaves $V_2$ at $\Out(O_2)$ at
\begin{equation}
\mathcal{L}_2(r,\varphi )=\left(-\frac{\omega_2\, \ln r}{E_2}+\varphi+ R_1(r,\varphi ;  \lambda),r^{\delta_2 }+R_2(r,\varphi;  \lambda )\right)=(x,y)
 \label{local_w}
\end{equation}
where $R_1$ and $R_2$ satisfy a  condition similar  to (\ref{diff_res}). The terms $S_1$, $S_2$,  $R_1$, $R_2$ correspond to asymptotically small terms that vanish when the radial components $y$ and $r$ go to zero. \bigbreak

\subsection{The transitions}\label{transitions}
The coordinates on $V_1$ and $V_2$ are chosen so that $[O_1\rightarrow O_2]$ connects points with $z>0$ (resp. $z<0$) in $V_1$ to points with $z>0$  (resp. $z<0$) in $V_2$. Points in $\Out(O_1) \setminus W^u_{\loc}(O_1)$ near $W^u(O_1)$ are mapped into $\In(O_2)$ along a flow-box around each of the connections $[O_1\rightarrow O_2]$. We will assume that the transition
$$\Psi_{1 \rightarrow  2}\colon \quad \Out(O_1) \quad \rightarrow  \quad \In(O_2)$$
does not depend on  $\lambda$ and is the Identity map, a choice compatible with hypothesis \textbf{(P4)} and  \textbf{(P5)}. Denote by $\eta$ the map
$$\eta =\mathcal{L}_{2} \circ \Psi_{1 \rightarrow  2} \circ \mathcal{L}_{1 }\colon \quad \In(O_1) \quad \rightarrow  \quad \Out(O_2).$$
From \eqref{local_v} and \eqref{local_w}, omitting high order terms in $y$ and $r$,  we infer that, in local coordinates, for $y>0$ we have
\begin{equation}\label{eqeta}
\eta(x,y)=\left(x-K_\omega \log y \,\,\,\pmod{2\pi}, \,y^{\delta} \right)
\end{equation}
with
\begin{equation}\label{delta e K}
\delta=\delta_1 \delta_2>1 \qquad \text{and} \qquad  K_\omega= \frac{C_1\, \omega_2+E_2\, \omega_1}{E_1 \, E_2} > 0.
\end{equation}

 \begin{figure}[h]
\begin{center}
\includegraphics[height=5.7cm]{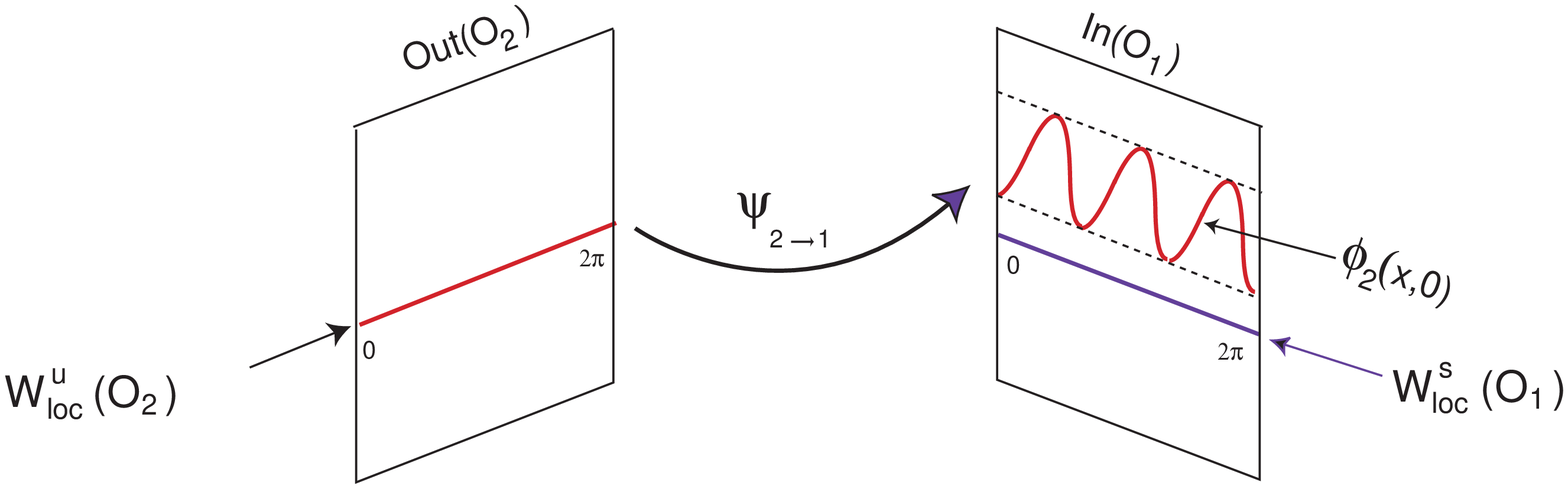}
\end{center}
\caption{\small  Illustration of the transition map  $\Psi_{2 \rightarrow  1} $ from $\Out(O_2)$ to $\In (O_1)$.  The graph of $\Phi_2(x,0)$ may be seen as the first hit of $W^u(O_2)$ to $\In (O_1)$.}
\label{transition_new}
\end{figure}

If $y<0$, solutions that are not trapped by the stable manifold of $O_1$, enter in $\In(O_1)$ after some positive time.  
Using  \textbf{(P7)} and \textbf{(P8)}, for $ \lambda \in \,  [0, \lambda_1]$, we still have a well defined transition map
$$\Psi_{2 \rightarrow  1}^{\lambda}:\Out(O_2)\rightarrow  \In(O_1)$$
that depends on the parameter $\lambda$, given by (see \textbf{(P8)}):
\begin{equation}\label{transition21}
\Psi_{2 \rightarrow  1}^{\lambda}(x,y)=\left(x + \xi +\lambda \Phi_1(x,y), \,y +\lambda \Phi_2(x,y) \right).
\end{equation}

 To simplify the notation, in what follows we will sometimes drop the subscript $ \lambda$, unless there is some  of misunderstanding.  
By \textbf{(P8)}, one knows that $\ln \Phi_2(x,0) $ has a finite number of nondegerate critical points -- see Figure \ref{transition_new}. Hereafter, let us collect them as $\{c^{(1)}, \ldots, c^{(q)}\}$ where $q\in \NN$, $c^{(i)}<c^{(i+1)}$ for $i\in \{1,...,q\}$ and $c^{(q+1)}\equiv c^{(1)}$. 

\section{Proof of Theorem \ref{thm:0}}
\label{proof Th A}
The proof of Theorem \ref{thm:0} is straightforward by composing the local and global maps constructed in Section \ref{localdyn}.  Let
\begin{equation}\label{first return 1}
\mathcal{F}_{\lambda} = \eta \circ  \Psi_{2 \rightarrow  1}^{\lambda}= \colon \quad  \mathcal{D} \subset \Out(O_2) \quad \rightarrow  \quad \mathcal{D} \subset \Out(O_2)
\end{equation}
be the first return map to $\mathcal{D}$, where $\mathcal{D}\subset \Sigma=\Out(O_2)$ is the set of initial conditions $(x,y) \in \Out(O_2)$ whose solution returns to $\Out(O_2)$. Up to high order terms, composing   $\eta$ (\ref{eqeta}) with $\Psi_{2 \rightarrow  1}^{\lambda}$ (\ref{transition21}), the analytic expression of $\mathcal{F}_{\lambda}$ is given by:
\label{first1}
\begin{eqnarray*}
\mathcal{F}_{\lambda}(x,y)&=& \left[  x+\xi+\lambda \Phi_1(x,y) - K_\omega  \ln (y+\lambda\Phi_2(x,y)) \pmod{2\pi}, \, \, \left(y +\lambda \Phi_2(x,y) \right)^\delta\right]\\
&=& \left(\mathcal{F}^1_{\lambda}(x,y), \mathcal{F}^2_{\lambda}(x,y)\right)
\end{eqnarray*}
The approximation of $\mathcal{F}_\lambda$ may be performed in a $C^3$--norm since the local maps $\mathcal{L}_1$ and $\mathcal{L}_2$   may be taken to be $C^{r-1}$ ($r\geq 4$ is the class of differentiability of the initial vector field) and the global maps are assumed to be $C^3$--embeddings.

 \begin{remark}
 When $\lambda = 0$, we may write (for $y> 0$):
 $$
 \mathcal{F}_{0}(x,y)= (  x+\xi - K_\omega  \ln y \pmod{2\pi}, \, \, y^\delta). $$
 This means that the $y$-component is contracting and thus the dynamics is governed by the $x$-component (circle maps). This is consistent to the fact that the network $\Gamma$ is asymptotically stable (Lemma \ref{attractor_lemma}). Notice that the remaining theory does not hold for $\lambda=0$ due to the change of coordinates \eqref{change1}.
 \end{remark}

\section{Proof of Theorem \ref{thm:B}}
\label{proof Th B}

\subsection{Insight into the reasoning}
In the proof  of Theorem \ref{thm:B}, we   transform the map $\mathcal{F}_\lambda$ into a two-parameter family of embeddings $\mathcal{F}_{(a,b)}$ satisfying the Hypotheses \textbf{(H1)--(H7)} of \cite{WY} (revisited in Subsection \ref{rank_one}). 
For a given  parameter $a\in [0, 2\pi[$, we construct a sequence $(b_n)_n$ of $\lambda$-values such that the singular limit is well defined.   If this  one-dimensional map has certain ``good'' properties, then some of them can be passed back to the 2-dimensional system ($b > 0$). They in
turn allow us to prove results on strange attractors for a positive Lebesgue measure set
of $\lambda$. Our reasoning is similar to the one used in Section 3 of \cite{WO}.

\subsection{Change of coordinates}
\label{change_of_coordinates}
For $\lambda \in\,\,  ]0, \lambda_1[ $ fixed and $(x,y) \in \Out(O_2)$, let us make the following change of coordinates:
  \begin{equation}
  \label{change1}
  \overline{x} \mapsto {x} \qquad \text{and} \qquad \overline{y} \mapsto \frac{y}{\lambda}.
  \end{equation}
Taking into account that: 
\begin{eqnarray*}
\mathcal{F}^1_{\lambda} (x,y) &=& x+\xi+{\lambda} \Phi_1(x,y) - K_\omega  \ln (y+{\lambda}\Phi_2(x,y)) \pmod{2\pi}\\
&=&   x+\xi+{\lambda} \Phi_1(x,y) - K_\omega  \ln \left[{\lambda} \left(\frac{y}{\lambda }+\Phi_2(x,y)\right)\right] \pmod{2\pi}\\
 &=&   x+\xi+{\lambda}  \Phi_1(x,y) - K_\omega  \ln \lambda -K_\omega \ln  \left[\left(\frac{y}{\lambda}+\Phi_2(x,y)\right)\right] \pmod{2\pi}
  \end{eqnarray*}
  and
    \begin{eqnarray*}
\mathcal{F}^2_{\lambda} (x,y) &=& (y+\lambda \Phi_2(x,y))^\delta =  \lambda^\delta \left(\frac{y}{\lambda}+ \Phi_2(x,y)\right)^\delta, 
  \end{eqnarray*}
  we may write:
\begin{eqnarray*}
\mathcal{F}^1_{\lambda} (x,\overline{y}) &=& x+\xi+{\lambda}  \Phi_1(x,\overline{y}) - K_\omega  \ln \lambda -K_\omega \ln  \left[\left(\overline{y}+\Phi_2(x,\overline{y})\right)\right] \pmod{2\pi}\\ \\
\mathcal{F}^2_{\lambda} (x,\overline{y}) &=&  \lambda^{\delta-1} \left(\overline{y}+ \Phi_2(x,\overline{y})\right)^\delta. \\
  \end{eqnarray*}

\subsection{Reduction to a singular limit}
\label{ss: reduction}
In this subsection, we compute the singular limit of $\mathcal{F}_\lambda$ written in the coordinates $(x,\overline{y})$ studied in Subsection \ref{change_of_coordinates}, for $\lambda \in\,\, ]0, \lambda_1[$.
Let  $k: \RR^+ \rightarrow \RR$ be the invertible map defined by $$k(x)= -K_\omega \ln (x),$$
whose graph is depicted in Figure \ref{scheme3A}.
 Define now the decreasing sequence $(\lambda_n)_n$ such that, for all $n\in \NN$, we have:\\
\begin{enumerate}
\item  $\lambda_n\in\, ]0, \lambda_1[$ and \\
\item $k(\lambda_n) \equiv 0 \pmod{2\pi}$.\\
\end{enumerate}
\medbreak
 \begin{figure}[h]
\begin{center}
\includegraphics[height=10cm]{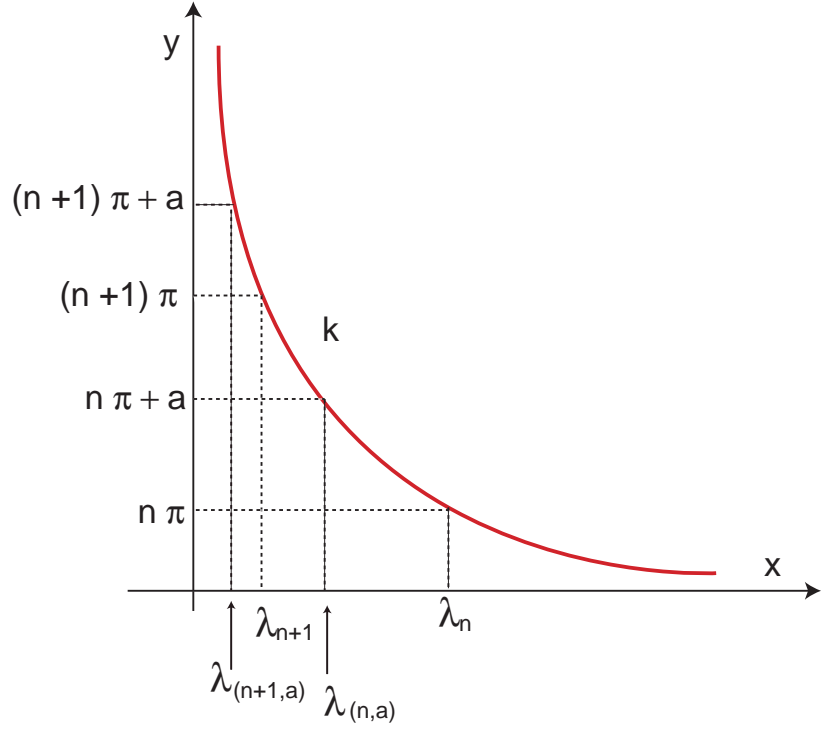}
\end{center}
\caption{\small  Graph of $k(x)= -K_\omega \ln (x)$ and illustration of the sequences $(\lambda_n)_n$ and $(\lambda_{(n, a)})_n$ for a fixed $a \in [0,2\pi[$.}
\label{scheme3A}
\end{figure}

Since $k$ is an invertible map,  for $a \in \EU^1 \equiv [0, 2\pi[$ fixed and $n\geq n_0\in \NN$, let  
\begin{equation}
\label{sequence1}
\lambda_{(a, n)}= k^{-1}(k(\lambda_n)+a)\,\,  \in \,\, ]0, \lambda_1[,
\end{equation}
as shown in Figure  \ref{scheme3A}. It is immediate to check that:
\begin{equation}
\label{sequence2}
k\left(\lambda_{(a, n)}\right)= -K_\omega \ln (\lambda_{n})+a=a \pmod{2\pi}.
\end{equation}

The following proposition establishes  the convergence of the map $\mathcal{F}_{\lambda_{(n,a)}}$  to a singular limit as $n \rightarrow +\infty$, ($\|\star\|_{\textbf{C}^r}$ represents the norm   in the $C^r$--topology for $r\geq2$):

\begin{lemma} 
\label{important lemma}
The following equality holds:
$$
\lim_{n\in \NN} \|\mathcal{F}_{\lambda_{(n,a)}} (x,\overline{y}) -( h_a(x,\overline{y}), \textbf{0})\|_{\textbf{C}^3} =0$$
where $\textbf{0}$ is the null map and
\begin{equation}
\label{circle map}
h_a(x, \overline{y})= x+a-K_\omega \ln (\overline{y} +\Phi_2(x,\overline{y})) +\xi.
\end{equation}
\end{lemma}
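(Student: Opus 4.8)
The plan is to substitute the rescaled parameter sequence $\lambda=\lambda_{(a,n)}$ directly into the two rescaled components computed in Subsection~\ref{change_of_coordinates}, and then to read off the limit using the two defining features of the sequence. First, $\lambda_{(a,n)}\to 0$ as $n\to\infty$: since $k(x)=-K_\omega\ln x$ is strictly decreasing with $k(x)\to+\infty$ as $x\to 0^+$, the requirement $k(\lambda_n)\equiv 0\pmod{2\pi}$ with $(\lambda_n)_n$ decreasing forces $k(\lambda_n)=2\pi m_n$ with $m_n\to+\infty$, whence $\lambda_{(a,n)}=k^{-1}(2\pi m_n+a)\to 0$. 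Second, the \emph{exact} congruence \eqref{sequence2}, namely $-K_\omega\ln\lambda_{(a,n)}\equiv a\pmod{2\pi}$, which is what pins the parameter $a$ to the phase shift in the circle map $h_a$.

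For the first component, I would insert $-K_\omega\ln\lambda_{(a,n)}\equiv a\pmod{2\pi}$ into the expression for $\mathcal{F}^1_{\lambda_{(a,n)}}(x,\overline{y})$, obtaining the identity
\begin{equation*}
\mathcal{F}^1_{\lambda_{(a,n)}}(x,\overline{y})-h_a(x,\overline{y})=\lambda_{(a,n)}\,\Phi_1(x,\overline{y})\pmod{2\pi},
\end{equation*}
so that the only $\lambda$-dependent contribution surviving is the term $\lambda_{(a,n)}\Phi_1$. Since $\Phi_1$ is $C^3$ on the compact domain and hence has bounded $\textbf{C}^3$-norm there, multiplying by $\lambda_{(a,n)}\to 0$ gives $\|\mathcal{F}^1_{\lambda_{(a,n)}}-h_a\|_{\textbf{C}^3}\to 0$. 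For the second component, I would use $\mathcal{F}^2_{\lambda_{(a,n)}}(x,\overline{y})=\lambda_{(a,n)}^{\,\delta-1}\left(\overline{y}+\Phi_2(x,\overline{y})\right)^\delta$ and note that, because $\delta>1$, the prefactor $\lambda_{(a,n)}^{\,\delta-1}\to 0$; as $(\overline{y}+\Phi_2)^\delta$ and its derivatives up to order three are bounded on the domain (here one uses that $\Phi_2>0$ by \textbf{(P8)}, so that $\overline{y}+\Phi_2$ stays bounded away from $0$ and its logarithm and powers are genuinely $C^3$ with controlled norm), one concludes $\|\mathcal{F}^2_{\lambda_{(a,n)}}\|_{\textbf{C}^3}\to 0$, i.e. convergence to the null map $\textbf{0}$.

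The main obstacle is not this leading-order computation, which is transparent, but the rigorous control of the higher-order remainder terms that were suppressed when passing from the full first-return map of Theorem~\ref{thm:0} (the ``$+\dots$'') to the truncated formulas in Subsection~\ref{change_of_coordinates}. These originate from the functions $S_1,S_2,R_1,R_2$ in \eqref{local_v}--\eqref{local_w}, and must be shown to vanish in the $\textbf{C}^3$-norm after the rescaling $y=\lambda\overline{y}$. The key point is that the differentiability estimate \eqref{diff_res}, of the form $|\partial^{k+l+m}S_i|\le C\,y^{\delta_1+\sigma-l}$, combines with the chain-rule factors of $\lambda$ produced by $\partial_{\overline{y}}=\lambda\,\partial_y$: a single $\overline{y}$-derivative converts the estimate into a bound of order $\lambda^{\delta_1+\sigma}\overline{y}^{\delta_1+\sigma-1}$, and since $\delta_1>1$ and $\sigma>0$ every such term, together with its derivatives up to order three, tends to zero uniformly on the compact domain as $\lambda_{(a,n)}\to 0$. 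Assembling these bounds—and checking that the remaining $\lambda$-dependence in the mod-$2\pi$ angular variable is harmless—yields the claimed $\textbf{C}^3$-convergence.
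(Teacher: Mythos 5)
Your proposal is correct and follows essentially the same route as the paper: substitute the exact congruence $-K_\omega\ln\lambda_{(a,n)}\equiv a \pmod{2\pi}$ into the rescaled components, so that the only surviving $\lambda$-dependent terms are $\lambda_{(a,n)}\Phi_1$ and the prefactor $\lambda_{(a,n)}^{\delta-1}$, both of which vanish as $\lambda_{(a,n)}\to 0$. You in fact go slightly beyond the paper's proof, which only records the pointwise limit: your explicit $\textbf{C}^3$-norm bounds and your treatment of the remainders $S_1,S_2,R_1,R_2$ via \eqref{diff_res} supply details the paper defers to a remark (citing \cite{WO}), and they are handled correctly, including the key point that $\overline{y}+\Phi_2$ stays bounded away from zero on the relevant compact domain.
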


\begin{proof}

Using \eqref{sequence2}, note that
\begin{eqnarray*}
\mathcal{F}^1_{\lambda_{(n,a)}} (x,\overline{y}) &=&  x+\xi+{\lambda_{(n,a)}}  \Phi_1(x,\overline{y})- K_\omega  \ln \lambda_{(n,a)} -K_\omega \ln  \left[\left(\overline{y}+\Phi_2(x,\overline{y})\right)\right] \pmod{2\pi} \\
&=&  x+\xi+{\lambda_{(n,a)}}  \Phi_1(x,\overline{y})+a  -K_\omega \ln  \left[\left(\overline{y}+\Phi_2(x,\overline{y})\right)\right] \pmod{2\pi}
  \end{eqnarray*}
  and
  \begin{eqnarray*}
\mathcal{F}^2_{\lambda_{(n,a)}} (x,\overline{y}) &=&  {\lambda_{(n,a)}}^{\delta-1} \left(\overline{y}+ \Phi_2(x,\overline{y})\right)^\delta.
  \end{eqnarray*}
    Therefore, since $\lim_{n\in \NN} {\lambda_{(n,a)}}=0$ we may write:
  $$
\lim_{n\in \NN}   \mathcal{F}^1_{\lambda_{(n,a)}} (x,\overline{y}) = x+\xi+a  -K_\omega \ln  \left[\left(\Phi_2(x,0)\right)\right] \pmod{2\pi}
  $$
and 
  $$
\lim_{n\in \NN}   \mathcal{F}^2_{\lambda_{(n,a)}} (x,\overline{y}) = 0,
  $$
 and we get the result.

\end{proof}

\begin{remark}
\label{rem7.2}
The map $h_a(x) =x+\xi+a  -K_\omega \ln  \Phi_2(x,0)\equiv   \mathcal{F}^1_{\lambda_{(n,a)}} (x,0)$ has finitely  many nondegenerate critical points (by Hypothesis \textbf{(P8)}).
\end{remark}

\subsection{Verification of the hypotheses of the theory of rank-one maps.}
\label{check1}

We show that the family of flow-induced maps $\mathcal{F}_{\lambda_{(n,a)}} \equiv \mathcal{F}_{(a, \lambda_{(n,a)})}$, $a\in \EU^1$ and $n\geq n_0$, satisfies Hypotheses \textbf{(H1)--(H7)} stated in Subsection \ref{rank_one}. Since our starting point  is an attracting   network for $\lambda=0$ (cf.  Lemma \ref{attractor_lemma}), the \emph{absorbing sets} defined in Subsection 2.4 of \cite{WY} are automatic here.

\medbreak
\begin{description}
\item[\text{(H1)}]  
The first two items are immediate.  We establish the distortion bound \textbf{(H1)(3)} by studying local maps and global maps separately. Direct computation using (\ref{eqeta}) and (\ref{transition21}), implies that for every $\lambda \in (0, \lambda_1)$ and $(x, \overline{y}) \in \Out(O_2)$, one gets:

$$D\Psi_{2 \rightarrow  1}^\lambda(x,\overline{y})=\left(\begin{array}{cc} \dpt 1+ \lambda \frac{\partial \Phi_1(x,\overline{y})}{\partial x} &\dpt  \lambda \frac{\partial \Phi_1(x,\overline{y})}{\partial y} \\ \\ \dpt \lambda \frac{\partial \Phi_2(x,\overline{y})}{\partial x}& \dpt1 + \lambda \frac{\partial \Phi_2(x,\overline{y})}{\partial y}\end{array}\right)
$$
and
$$
D\eta(x,y)=\left(\begin{array}{cc} 1  &\dpt\frac{-K_\omega}{{y}}\\ \\ 0&\delta {y}^{\delta-1}\end{array}\right)
$$
Since $\mathcal{F}_{\lambda} = \eta \circ  \Psi_{2 \rightarrow  1}^{\lambda}$, taking $(X,Y)= \Psi_{2 \rightarrow  1}^{\lambda}(x,\overline{y})$, we may write:

\begin{eqnarray*}
\label{bound}
&&|\det D \mathcal{F}_\lambda(x,\overline{y})| = \\ \\
&=& | \det D\eta(X,Y)| \times |\det D \Psi_{2 \rightarrow 1}^\lambda(x,\overline{y})| \\ \\ 
&=&\left|\delta Y^{\delta-1}\right| \times \left|\left[\left(1+\lambda \frac{\partial \Phi_1(x,\overline{y})}{\partial x}\right)  \left(1    + \lambda \frac{\partial \Phi_2(x,\overline{y})}{\partial y}\right) - \lambda^2 \frac{\partial \Phi_2(x,\overline{y})}{\partial x}\frac{\partial \Phi_1(x,\overline{y})}{\partial x}\right] \right|.
\end{eqnarray*}
Since  $\delta Y^{\delta-1}>0$ (by \textbf{(P8)}),  we conclude that there exists $\lambda_2>0$ small enough such that:
$$ 
\forall \lambda \in \, ]0, \lambda_2 [, \qquad |\det D \mathcal{F}_\lambda(x,\overline{y})|  \in  \,\,  ]k_1^{-1}, k_1[ ,
$$
\bigbreak
for some $k_1>1$. This implies that hypothesis \textbf{(H1)(3)}  is satisfied.
\bigbreak
\item[\text{(H2) and (H3)}] These items follow from Lemma \ref{important lemma} where $b=\lambda_{(n,a)}$.

\bigbreak

\end{description}
\bigbreak
\bigbreak
The next proposition generalizes the results of Subsections 5.2 and 5.3 of \cite{WY} and will be used in the sequel.

\begin{proposition}[\cite{WY, WY2003}, adapted]
\label{Prop2.1WY}
 Let $A: \EU^1 \rightarrow \RR$ be a $C^3$--function with nondegenerate critical points. Then there exist $L_1$ and $\delta$ depending on $A$ such that if $L \geq  L_1$  and $B: \EU^1 \rightarrow \RR$ is a $C^3$
function with $\|\Psi\|_{\textbf{C}^2} \leq \delta$ and $\|B\|_{\textbf{C}^3}\leq 1$,
 then the family of maps
 $$ \theta \mapsto \theta+a+L(A(\theta) +B(\theta)), \qquad a\in [0,2\pi[, \qquad \theta\in \EU^1$$
satisfies \textbf{(H4)} and \textbf{(H5)}. If $L$ is sufficiently large, then  Hypothesis \textbf{(H7)} is also verified. 
\end{proposition}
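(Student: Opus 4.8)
The plan is to regard $h_a(\theta)=\theta+a+L\,\Phi(\theta)$, with $\Phi=A+B$, as a strongly expanding family of circle maps carrying finitely many quadratic critical points, and to extract each hypothesis from the largeness of $L$. First I would record the geometry common to all the verifications. Because $h_a'(\theta)=1+L\,\Phi'(\theta)$ does not depend on $a$, the critical set $C=\{\theta\in\EU^1:h_a'(\theta)=0\}=\{\Phi'=-1/L\}$ is \emph{independent} of $a$; this is the observation that later trivialises \textbf{(H5)}. Taking $\|B\|_{\textbf{C}^2}$ small and $L\ge L_1$ large, the nondegeneracy of the critical points of $A$ together with the implicit function theorem forces $C$ to consist of exactly $q$ points, one in a small neighbourhood of each critical point of $A$, with $h_a''=L\,\Phi''\ne0$ on a fixed neighbourhood $C_{\delta_0}$ of $C$; this is item (1)(a) of the Misiurewicz definition. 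Expanding $\Phi'$ to first order at each critical point gives $|\Phi'(\theta)|\ge\kappa\,\operatorname{dist}(\theta,C)$ on $C_{\delta_0}$, while $|\Phi'|$ is bounded below off $C_{\delta_0}$, whence $|h_a'(\theta)|\ge L\kappa\delta-1$ for $\theta\notin C_\delta$. For $L$ large this delivers the expansion estimates (2)(a)--(2)(b) with rate $\lambda_0\approx\log(L\kappa\delta_0)$.

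The substantive part of \textbf{(H4)} is the non-recurrence condition (1)(b), and this is where I expect the main difficulty. I would produce $a^\star$ by a parameter-exclusion argument. Writing $\theta_n(a)=h_a^n(c_i)$ for a critical point $c_i$, the chain rule gives the recursion $\theta_n'(a)=h_a'(\theta_{n-1}(a))\,\theta_{n-1}'(a)+1$; as long as the orbit has stayed outside $C_\delta$ one has $|h_a'|\ge\Lambda:=L\kappa\delta-1$, so $|\theta_n'|\ge\Lambda|\theta_{n-1}'|-1$ and $|\theta_n'(a)|$ grows geometrically, like $\Lambda^{n}$. Consequently $a\mapsto\theta_n(a)$ is, on each monotone piece of good parameters, a map with very large derivative, so the set of $a$ with $\theta_n(a)\in C_\delta$ has measure $\lesssim\Lambda^{-n}$ per piece. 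Summing over $n\in\NN$ and over the $q$ critical points, the total measure of excluded parameters is $\lesssim q\,\delta\,(\Lambda-1)^{-1}$, small once $\delta$ is small and $L$ large; the complement is a positive-measure set, and any $a^\star$ in it satisfies (1)(b). The delicate point, carried out by induction on $n$ exactly as in Subsections~5.2--5.3 of \cite{WY}, is that the derivative estimate is valid only while the orbit remains outside $C_\delta$, so the bad parameters must be deleted stage by stage before the estimate for the next stage is invoked.

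For \textbf{(H5)} I use that $C$ does not move with $a$. Thus the continuation $x(\tilde a)\equiv x$ is constant, and $\tfrac{d}{da}h_{\tilde a}(x(\tilde a))\big|_{a=a^\star}=\tfrac{d}{da}\bigl(x+\tilde a+L\Phi(x)\bigr)=1$. On the other hand, writing $P_k(\tilde a)=h_{\tilde a}^k(p(\tilde a))$ for the itinerary-preserving continuation of the critical value $p=h_{a^\star}(x)$, the relation $P_{k+1}'=h_{\tilde a}'(P_k)\,P_k'+1$ can be solved backwards; boundedness of $P_k'$ together with $|h_{\tilde a}'(P_k)|\ge\Lambda$ (the orbit stays in the expanding region) forces
\[
\frac{d}{da}p(\tilde a)\Big|_{a=a^\star}=-\sum_{k=1}^{\infty}\Bigl(\prod_{j=0}^{k-1}h_{a^\star}'(P_j)\Bigr)^{-1},
\]
whose modulus is at most $(\Lambda-1)^{-1}<1$ for $L$ large. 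Hence the two derivatives differ and \textbf{(H5)} holds.

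Finally, \textbf{(H7)} follows from the same expansion for $L$ sufficiently large. Since $\lambda_0\approx\log(L\kappa\delta_0)\to\infty$, one has $\exp(\lambda_0/3)>2$, giving (H7)(a). For (H7)(b), each interval of monotonicity $J_i$ lies between consecutive critical points, across which $\Phi$ changes by a fixed nonzero amount $d_i$; therefore $h_{a^\star}$ has total variation $\approx L\,d_i\gg2\pi$ on $J_i$, so $h_{a^\star}(J_i)$ covers $\EU^1$ and contains every $J_m$. Thus the transition matrix $Q$ is the all-ones matrix and $Q^N>0$ already for $N=1$. Assembling the four verifications, with $L_1$ and $\delta$ chosen in terms of the critical data of $A$, completes the proof; the only genuinely hard ingredient is the inductive parameter exclusion of the second paragraph.
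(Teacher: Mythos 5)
The paper offers no proof of this proposition: it is imported, with adaptations, from Wang--Young (Subsections 5.2--5.3 of \cite{WY} and \cite{WY2003}), so your attempt must be measured against that argument. Your architecture is indeed the Wang--Young route, and two of your four verifications are sound: for \textbf{(H5)}, the observation that $C=\{\Phi'=-1/L\}$ is independent of $a$, so that $\frac{d}{da}h_{\tilde a}(x(\tilde a))=1$ while $|\frac{d}{da}p(\tilde a)|\leq\sum_{k\geq 1}\Lambda^{-k}<1$ by the backward-solved recursion, is exactly the standard computation (modulo routine justification that the itinerary-preserving continuation is differentiable with bounded $P_k'$); and your \textbf{(H7)} paragraph (expansion rate growing with $L$, each monotonicity branch covering $\EU^1$ so $Q$ is the all-ones matrix with $N=1$) is correct and matches the paper's ``big lobe'' remark. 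The two gaps are in \textbf{(H4)}. First, your parameter-exclusion estimate is quantitatively wrong: the set $\{a:\theta_n(a)\in C_\delta\}$ does \emph{not} have measure $\lesssim\delta\Lambda^{-n}$, because $a\mapsto\theta_n(a)$ wraps around the circle on the order of $\Lambda^n$ times; the per-branch bound $\delta\Lambda^{-n}$ must be multiplied by $\sim\Lambda^n$ monotone branches, so each stage excludes measure $\sim q\delta$, \emph{not} a geometrically decaying amount. The surviving set after $n$ stages then has measure $\lesssim(1-c\delta_0)^n\to 0$: parameters satisfying the fixed-$\delta_0$ non-recurrence condition (1)(b) form, in general, a measure-zero Cantor set, and your conclusion that they have positive measure is false. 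Since \textbf{(H4)} needs only one $a^\star$, this is repairable, but by a different argument: an inductive interval selection (retain at each stage a parameter interval on which $a\mapsto h_a^n(c)$ stretches fully across $\EU^1$ and choose a subinterval whose image clears $C_{\delta_0}$, simultaneously for the $q$ critical points), or a choice of $a^\star$ making all critical orbits preperiodic to repelling orbits. The positive-measure sets in rank-one theory arise elsewhere --- in the two-dimensional conclusion via \textbf{(H5)}, and for Collet--Eckmann-type conditions with exponentially shrinking thresholds $e^{-\alpha n}$ as in \textbf{(CE1)} of Section 9 --- not for (1)(b).

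Second, your claim that the expansion conditions (2)(a)--(2)(b) ``deliver'' directly from $|h_a'(\theta)|\geq L\kappa\,\operatorname{dist}(\theta,C)$ off $C_\delta$ fails for small $\delta$: the conditions must hold for \emph{all} $\delta<\delta_0$ with fixed $b_0,\lambda_0$, and once $\delta<e^{\lambda_0}/(L\kappa)$ the per-visit factor $L\kappa\delta$ is below $e^{\lambda_0}$. For an arbitrary parameter nothing prevents an orbit from making consecutive shallow passes through $C_{\delta_0}\setminus C_\delta$ (e.g.\ when a critical value lands near $C$), in which case the naive product $(L\kappa\delta)^s$ violates the required bound $b_0\delta e^{\lambda_0 n}$, whose single factor of $\delta$ encodes precisely ``at most one unrecovered near-critical pass.'' Conditions (2)(a)--(b) are therefore not uniform-in-$a$ consequences of largeness of $L$; they are proved for $h_{a^\star}$ \emph{together with} (1)(b) via the recovery (bound-period) mechanism: after a pass at distance $d\geq\delta$ from $c\in C$, the image is $O(Ld^2)$-close to the critical value $h_{a^\star}(c)$, whose forward orbit stays $\delta_0$-clear of $C$ by (1)(b); shadowing that orbit until the error grows to size $\delta_0$ regains a factor of order $\delta_0/(Ld^2)$, so the pass and its bound period jointly contribute a factor $\gtrsim\kappa\delta_0/d$, bounded below independently of $d$. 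This intertwining of (1)(b) and (2) is the actual content of Subsections 5.2--5.3 of \cite{WY}; your ordering --- prove (2) for all $a$ first, then exclude parameters for (1)(b) --- cannot be made to work as written.
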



\begin{description}
\item[\text{(H4) and (H5)}] 
These hypotheses are connected with the family of circle maps $$h_a: \EU^1 \rightarrow \EU^1$$ defined in Remark \eqref{rem7.2}. 
Taking into account the Proposition \ref{Prop2.1WY}, the family $$
h_a(x)=x+\xi +a+K_\omega \log(\Phi_2(x,0)) \qquad a\in [0,2\pi[
$$
satisfies Properties \textbf{(H4)} and \textbf{(H5)}.  

\bigbreak
\item[\text{(H6)}] The computation follows  from direct computation using  the expression of $\mathcal{F}^1_\lambda(x, 0)$. Indeed, for each $x\in C_{a^\star}$ (set of critical points of the Misiurewicz-type map $h_{a^\star}$ whose existence is ensured in \textbf{(H4)}), we have 
$$
\frac{d}{dy} \mathcal{F}^1_{(a^\star,0)}(x,y) |_{y=0} =1+ K_\omega \,\,\frac{1+ \frac{\dpt \partial\Phi_2(x,y)}{\partial y}|_{y=0}}{1+\Phi_2(x,y)}\neq 0,
$$
as a consequence of  \textbf{(P8)}.
\bigbreak

\item[\text{(H7)}] It follows from Proposition \ref{Prop2.1WY} if $K_\omega $ is large enough ($\Rightarrow$ the ``big lobe''  of \cite{SST, TS1986} is large enough).

\end{description}

\bigbreak

Since the family $\mathcal{F}_{(a,b)}$ for $b=\lambda_{(n,a)}$ satisfies \textbf{(H1)--(H7)} of  \cite{WY} (revisited in  Theorem \ref{th_review}) then, for $\lambda_0=\min\{\lambda_1, \lambda_2\}>0$, for $K_\omega >K_\omega^0$, there exists a subset $\tilde\Delta \in  [0, \lambda_0]$ with positive Lebesgue
measure such that for $\lambda\in \tilde\Delta$, the map $\mathcal{F}_{\lambda}$ admits a strange attractor 
\begin{equation}
\label{Omega}
 \Omega \subset \bigcap_{m=0}^{+\infty}  \mathcal{F}_{\lambda}^m(\mathcal{D})
 \end{equation}
   that supports a unique ergodic SRB measure $\nu$. The orbit of Lebesgue almost all points in ${\Omega}$ has positive Lyapunov exponent  and is asymptotically distributed according to $\nu$.

\begin{remark}

From  the reasoning of Section 3 of \cite{WO}, we also have:  
\begin{equation}
\label{abunda2}
\liminf_{r\rightarrow 0^+}\, \,  \frac{ \emph{Leb} \left\{\lambda \in [0,r]: \mathcal{F}_\lambda  \text{  has a strange attractor with a SRB measure}\right\}}{r}  >0.
\end{equation}
where \emph{Leb} denotes the one-dimensional Lebesgue measure. This means that the existence of strange attractors for $ \mathcal{F}_\lambda$ is an \emph{abundant phenomenon} in the terminology of \cite{MV93}. 
\end{remark}

\begin{remark}
Theorem B remains valid if  we take into account the high order terms of \eqref{local_v} and \eqref{local_w}. The proof's arguments run along the same lines to those of \cite{WO}. 
\end{remark}

\subsection{Ergodic remark}

For the sake of completeness, we decided to add this statistical remark on the article based on the results of \cite{WY2013}.  
The dynamical system $(\mathcal{F}_{\lambda}, \nu) $ has \emph{exponential decay of correlations} for H\"older continuous observables: given an H\"older exponent $\eta$, there exists $\tau_\eta<1$ such that for all Holder maps $\varphi$, $\psi: \Omega \rightarrow \RR$ with H\"older exponent $\eta$, there exists $K(\varphi, \psi)$ such that for all $m\in\NN$, we have:
$$
\left|\int (\varphi \circ \mathcal{F}^{m}_\lambda)\psi d\nu - \int \varphi d\nu \int \psi d \nu\right| \leq K(\varphi,\psi) \tau^m.
$$
A further analysis is outside the scope of the present work. 

\section{Proof of Theorem \ref{thm:C} }
\label{Proof: ThC}

In this section, we prove the existence of a superstable sink for a sequence of $\lambda$-values for the one-parameter family  \eqref{general2.1}. For $q\in \NN$ and $a\in [0,2\pi]$, let $C(h_a)=\{c^{(1)},...,c^{(q)}\}$ be the set of critical points of $h_a$ defined in Remark \ref{rem7.2}. As the result of  the  rank-one theory, for $\lambda$ small, all points lying in $\Omega$ (see \eqref{Omega})  have at least one contracting direction.

 \medbreak
Let $a^\star \in [0, 2\pi]$ be such that the limit family $h_{a^\star}$ is a  Misiurewicz-type map (see Subsection \ref{check1}).    
Therefore, from Subsection \ref{Misiurewicz-type map}, we may define the numbers:\\
\begin{itemize}
\item[$\delta_0$:]   size of the critical set of $h_{a^\star}$; \\
\item[$\lambda_0$:]   logarithmic expansion of the orbits outside the critical set; \\
\item[$b_0$:]    {pre-factor associated to the} exponential growth outside the critical set. \\

\end{itemize}
\medbreak
For fixed $\lambda<\lambda_0/5$ and   $\alpha>0$ small, let  $\Delta(\lambda, \alpha)$ be the set of $a\in [0,2\pi]$ for which the following conditions hold for $c\in C(h_a)$ and $n\in \NN$:\\
\begin{description}
\item[\textbf{(CE1)}]   $dist(h_a^n(c), C(h_a)) \geq \min\{\delta_0/2,  2\exp (-\alpha n)\}$; \\
\item[\textbf{(CE2)}]  $|(h_a^n)'(h_a(c))|\geq 2b_0\delta_0 \exp (\lambda n)$.\\
\end{description}
These assertions are usually called by $(\lambda, \alpha)$ \emph{Collet-Eickmann conditions}. The next technical result says that $a^\star$ is a Lebesgue \emph{density point} of $\Delta(\lambda, \alpha)$. This means that,   under precise conditions on $\alpha$ and $\lambda$, we may  easily find elements $\hat{a}\in [0,2\pi]$  for which the  $(\lambda, \alpha)$ Collet-Eickmann conditions are verified.

\begin{lemma} [\cite{WY2006}, adapted]
\label{abundance1}
For $a^\star \in [0, 2\pi]$ as above, the following equality holds for  fixed $\lambda<\lambda_0/5$ and   $\alpha>0$ small:
$$
\liminf_{r\rightarrow 0^+} \frac{\text{Leb} \{\Delta(\lambda, \alpha) \cap [a^\star, a^\star+r]\}}{r}=1.
$$
\end{lemma}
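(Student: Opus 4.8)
The plan is to establish this density-point statement by the parameter-exclusion technique of Benedicks--Carleson and Wang--Young: one tracks the critical orbit of $h_a$ as a function of the parameter $a$, uses the transversality condition \textbf{(H5)} to convert the phase-space expansion granted by \textbf{(CE2)} into expansion in parameter space, and bounds the Lebesgue measure of the parameters excluded at each iterate by an exponentially decaying quantity. The assertion to prove is equivalent to $\lim_{r\to 0^+}\, r^{-1}\,\text{Leb}\big([a^\star,a^\star+r]\setminus\Delta(\lambda,\alpha)\big)=0$, so the whole task reduces to showing that the excluded set has density zero at $a^\star$.

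First I would record the slack afforded by the Misiurewicz hypothesis. Since $h_{a^\star}$ is a Misiurewicz-type map (Subsection \ref{Misiurewicz-type map}), its critical orbits satisfy the stronger bounds $dist(h_{a^\star}^n(c), C(h_{a^\star}))\geq \delta_0$ for every $n\in\NN$, together with uniform exponential growth of rate $\lambda_0$ and prefactor $b_0$ outside $C_\delta$. Because the thresholds imposed in \textbf{(CE1)}--\textbf{(CE2)} are strictly weaker (namely $\delta_0/2$ and the rate $\lambda<\lambda_0/5$), the critical orbits of nearby parameters may deviate substantially while still satisfying the Collet--Eckmann conditions. Writing $c(a)$ for the continuation of $c\in C(h_{a^\star})$, a continuity and finite-time-shadowing argument shows that there is an integer $N(r)\sim\log(1/r)$ below which \emph{no} parameter in $[a^\star,a^\star+r]$ is excluded.

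Next I would set up an induction on the iterate. For each $n$ let $\Delta_n$ be the set of parameters $a$ for which \textbf{(CE1)} and \textbf{(CE2)} hold for all $k\leq n$, so that $\Delta(\lambda,\alpha)=\bigcap_n\Delta_n$. Assuming $a\in\Delta_{n-1}$, the accumulated expansion from \textbf{(CE2)} controls the parameter derivative $\frac{d}{da}\,h_a^n(c(a))$ through \textbf{(H5)} together with a bounded-distortion estimate; consequently $a\mapsto h_a^n(c(a))$ is a local diffeomorphism whose derivative grows like $\exp(\lambda n)$. Pulling back the forbidden neighbourhood of $C(h_a)$, of width $\min\{\delta_0/2,2\exp(-\alpha n)\}$, under this map therefore yields a parameter set of measure $\lesssim \exp(-\alpha n)\exp(-\lambda n)$, which is exponentially small. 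The bounded-distortion step itself relies on \textbf{(CE1)} (keeping the orbit away from $C(h_a)$, where the derivative could collapse) and on the nondegeneracy $h''\neq 0$ near the critical set guaranteed by the Misiurewicz definition. Summing these exclusion estimates gives
$$
\text{Leb}\big([a^\star,a^\star+r]\setminus\Delta(\lambda,\alpha)\big)\;\leq\;\sum_{n\geq N(r)}C\,\exp\big(-(\alpha+\lambda)\,n\big)\;=\;o(r),
$$
since the series starts at $N(r)\sim\log(1/r)$ and decays geometrically. Dividing by $r$ and letting $r\to 0^+$ yields the claimed density $1$.

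The hardest ingredient is the phase-space-to-parameter-space transfer in the inductive step: quantifying how $\frac{d}{da}\,h_a^n(c(a))$ inherits the exponential growth of $(h_a^n)'$ along the critical orbit. This requires a bounded-distortion argument valid uniformly over the surviving parameter intervals of $\Delta_{n-1}$, and a careful treatment of the two regimes of the threshold $\min\{\delta_0/2,2\exp(-\alpha n)\}$, constant for small $n$ and exponentially shrinking for large $n$, so as to keep the excluded measure simultaneously summable and $o(r)$. Once this transfer and distortion control are in place, the remaining estimates are the routine geometric-series bookkeeping indicated above, and the result follows from the adapted machinery of \cite{WY2006}.
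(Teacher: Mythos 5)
The paper offers no proof of this lemma at all: it is imported (``adapted'') directly from Wang--Young \cite{WY2006}, so there is no internal argument to compare against. Your proposal is in effect a reconstruction of the Wang--Young/Benedicks--Carleson parameter-exclusion machinery, and its architecture is the right one: a free initial period $N(r)\sim\log(1/r)$ during which the Misiurewicz slack ($\delta_0$ versus $\delta_0/2$, $\lambda_0$ versus $\lambda<\lambda_0/5$) prevents any exclusion, an induction over surviving parameter sets, the transversality condition \textbf{(H5)} to transfer the phase-space expansion of \textbf{(CE2)} into growth of $\frac{d}{da}h_a^n(c(a))$, and bounded distortion to make the pullback estimates uniform.

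However, your final measure estimate contains a genuine error. You bound the parameters excluded at step $n$ by pulling back one window of width $2e^{-\alpha n}$ under a map with derivative $\gtrsim e^{\lambda n}$, obtaining an absolute bound $\lesssim e^{-(\alpha+\lambda)n}$. This implicitly treats $a\mapsto h_a^n(c(a))$ as a single monotone branch over $[a^\star,a^\star+r]$; in fact, for large $n$ its image wraps around $\EU^1$ on the order of $r\,e^{\lambda n}$ times, so the forbidden window has that many preimage branches, and the correct bound is $\lesssim r\,e^{-\alpha n}$ --- a \emph{relative} exclusion of order $e^{-\alpha n}$ per step, which is precisely what the bounded-distortion argument delivers. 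Moreover, even taken at face value your bound does not give the conclusion: with $N(r)=\log(1/r)/C$, where $C$ is the rate governing the free period, one has $\sum_{n\geq N(r)}e^{-(\alpha+\lambda)n}\asymp r^{(\alpha+\lambda)/C}$, and since $\alpha$ is small and $\lambda<\lambda_0/5$ while $C$ is at least of order $\lambda_0$, the exponent is strictly less than $1$, so this sum is \emph{not} $o(r)$. The repaired bookkeeping, $\mathrm{Leb}\bigl([a^\star,a^\star+r]\setminus\Delta(\lambda,\alpha)\bigr)\lesssim r\sum_{n\geq N(r)}e^{-\alpha n}=r\cdot o(1)$ as $r\to 0^+$, yields the claimed density one and is the route actually taken in \cite{WY2006}.
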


  \begin{figure}[h]
\begin{center}
\includegraphics[height=6.2cm]{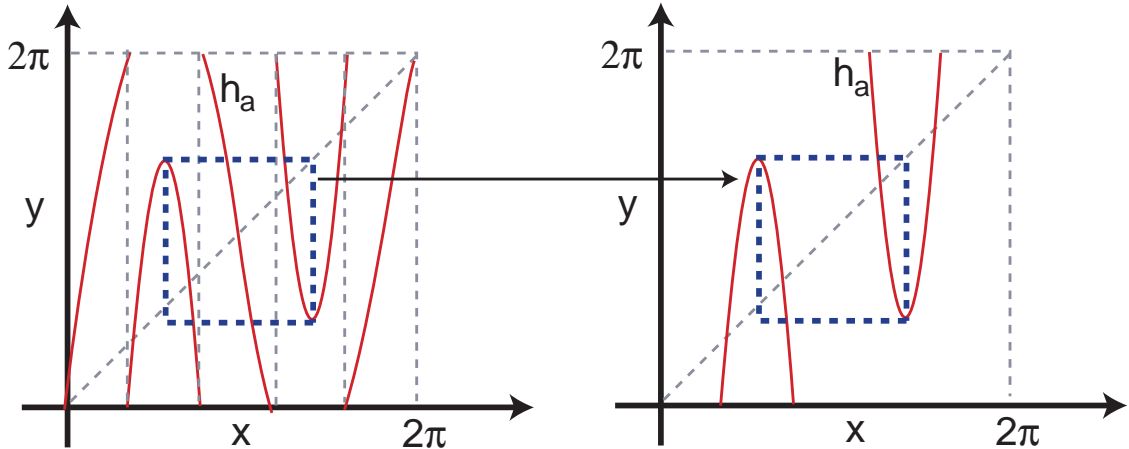}
\end{center}
\caption{\small  Graph of the map $h_a$ for $K_\omega \gg K_\omega^0$ with $q=2$ (number of critical points). Indicated is a superstable periodic orbit of period 2  as it contains a critical point. }
\label{periodic_orbit}
\end{figure}

  To  prove Theorem  \ref{thm:C}, we make use of the following result:
  \bigbreak
  \bigbreak
\begin{proposition}[\cite{OW10}, adapted]
\label{OW_review}
Let $\{h_a: a\in [0, 2\pi]\}$ be a family of maps  as above such that: \\
\begin{enumerate}
\item[(i)] there exists $a^\star$ such that  $h_{a^\star}$ is a Misiurewicz-type map; \\
\item[(ii)] $[0, 2\pi]\subset h_{a^\star}([c^{(i)}, c^{(i+1)}])$, for all  $i\in \{1, ...q\}$;\\
\item[(iii)] $\exp(\lambda_0)>\ln 10$.\\
\end{enumerate}
Hence, for every $\alpha>0$ sufficiently small and every $\hat{a}\in \Delta(\lambda, \alpha)$ sufficiently close to $a^\star$, there exists $(a_n)_{n\in \NN}$ converging to $\hat{a}$ for which $h_{a_n}$ admits a superstable periodic orbit. 
\end{proposition}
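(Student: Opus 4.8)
The plan is to locate, for each large $n$, a parameter $a_n$ close to $\hat a$ at which some critical point of $h_{a_n}$ is periodic; since $h_{a_n}'$ vanishes at a critical point, any periodic orbit through it has zero multiplier and is therefore superstable. Because the critical points are nondegenerate, each $c=c^{(i)}$ admits a $C^2$ continuation $c(a)$ with $h_a'(c(a))=0$ for $a$ near $a^\star$, by the implicit function theorem. The central object is the \emph{critical-value map in parameter}, $a\mapsto h_a^{\,n}(c(a))$, together with the displacement $g_n(a)=h_a^{\,n}(c(a))-c(a)\pmod{2\pi}$; a zero of $g_n$ yields a critical point that returns to itself after $n$ iterates, hence a superstable periodic orbit.

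First I would establish that $g_n$ has an exponentially large derivative near $\hat a$. Writing $x_k=h_a^{\,k}(c(a))$ and using that $h_a(x)=x+\xi+a-K_\omega\ln\Phi_2(x,0)$ depends on $a$ only through the additive term, so that $\partial h_a/\partial a\equiv 1$, the variational recursion $\frac{d x_{k}}{da}=1+h_a'(x_{k-1})\frac{d x_{k-1}}{da}$ together with $h_a'(x_0)=0$ gives
\[
\frac{d}{da}h_a^{\,n}(c(a))=1+\sum_{j=1}^{n-1}\ \prod_{\ell=n-j}^{n-1}h_a'(x_\ell),
\]
whose leading term is $(h_a^{\,n-1})'(h_a(c(a)))$. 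For $\hat a\in\Delta(\lambda,\alpha)$ the Collet--Eckmann bound \textbf{(CE2)} gives $|(h_{\hat a}^{\,n-1})'(h_{\hat a}(c))|\ge 2b_0\delta_0\exp(\lambda(n-1))$, and a bounded-distortion estimate (using \textbf{(CE1)} to keep the orbit away from $C_{\delta}$ at the controlled rate $2\exp(-\alpha k)$) shows the full product dominates the remaining, shorter products in the sum. Hence $\big|\frac{d}{da}h_a^{\,n}(c(a))\big|\gtrsim\exp(\lambda n)$, and since $|\frac{dc}{da}|$ is bounded, $|g_n'(a)|\gtrsim\exp(\lambda n)$ as well, uniformly for $a$ in a window $I_n$ around $\hat a$.

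Next I would run an intermediate-value argument on $g_n$ over $I_n$. Choosing $I_n$ of length comparable to $\exp(-\lambda n)$, the monotone map $g_n$ sweeps an interval of length $>2\pi$, i.e.\ across the whole circle; the covering hypothesis (ii), $[0,2\pi]\subset h_{a^\star}([c^{(i)},c^{(i+1)}])$, guarantees that the itinerary steering $c(a)$ back into a neighbourhood of itself is admissible, so $0$ lies in the range of $g_n$. Thus there is $a_n\in I_n$ with $g_n(a_n)=0$, i.e.\ $h_{a_n}^{\,n}(c(a_n))=c(a_n)$: the critical point is periodic and its orbit is superstable. Since the length of $I_n$ tends to $0$, we get $a_n\to\hat a$, producing the required sequence. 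The quantitative inputs $\lambda<\lambda_0/5$, $\alpha$ small, and (iii) $\exp(\lambda_0)>\ln 10$ are exactly what make the constants fit.

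The main obstacle is the uniform control of $g_n'$ across a parameter window $I_n$ long enough to sweep the full circle: \textbf{(CE2)} is an estimate at the single parameter $\hat a$, so one must propagate it to all $a\in I_n$ via a parameter-space bounded-distortion (shadowing) argument, and simultaneously verify that $|I_n|\gtrsim\exp(-\lambda n)$ remains small enough for \textbf{(CE1)}--\textbf{(CE2)} and the admissibility from (ii) to persist along the whole window. This balance --- that the exponential rate of the sweeping beats the rate at which the Collet--Eckmann estimates may degrade under parameter variation --- is the delicate point, and it is precisely where the hypotheses on $\lambda$, $\alpha$ and $\lambda_0$ enter; the estimates are obtained by adapting the parameter-exclusion machinery of \cite{WY2006, OW10}.
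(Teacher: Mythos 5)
You cannot be compared against the paper's own proof here, because the paper does not give one: Proposition \ref{OW_review} is imported wholesale as an adaptation of a theorem of Ott and Wang \cite{OW10}, and the text passes immediately to its application in the proof of Theorem \ref{thm:C}. Judged against the cited source, your sketch is a faithful reconstruction of the standard argument: the $C^2$ continuation $c(a)$ of a nondegenerate critical point, the variational identity exhibiting $\tfrac{d}{da}h_a^{\,n}(c(a))$ as $1+\sum_{j=1}^{n-1}\prod_{\ell=n-j}^{n-1}h_a'(x_\ell)$ with leading term $(h_a^{\,n-1})'(h_a(c(a)))$ (correct, since $\partial_a h_a\equiv 1$ and $h_a'(c(a))=0$ kills the $c'(a)$ term), the Collet--Eckmann bound \textbf{(CE2)} converting this into $|g_n'|\gtrsim\exp(\lambda n)$, and an intermediate-value sweep of a window $I_n$ with $|I_n|\sim\exp(-\lambda n)$ across the whole circle to solve $h_{a_n}^{\,n}(c(a_n))=c(a_n)$, which is automatically superstable because the multiplier of the orbit contains the factor $h_{a_n}'(c(a_n))=0$. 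Since $|I_n|\to 0$, the zeros $a_n$ accumulate on $\hat a$, as required.

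Two caveats. First, as you acknowledge, the uniform propagation of \textbf{(CE2)} from the single parameter $\hat a$ to all of $I_n$ --- the parameter-space bounded-distortion estimate, including control of returns to $C_\delta$ via \textbf{(CE1)} and the domination of the shorter products in the sum --- is not a technical afterthought but the actual mathematical content of \cite{OW10} (building on \cite{WY2006}); your proposal defers exactly the step that constitutes the proof, so as written it is an accurate roadmap rather than a complete argument. Second, your use of the hypotheses is partly misattributed: once $g_n$ is monotone on $I_n$ and its image has length exceeding $2\pi$, the equation $g_n\equiv 0 \pmod{2\pi}$ has a solution with no admissibility input whatsoever, so the covering condition (ii) is not what delivers the zero; in \cite{OW10} conditions of type (ii)--(iii) feed into the expansion-recovery and distortion estimates attached to the Misiurewicz structure, and invoking (iii) as ``what makes the constants fit'' is decorative in your sketch. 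None of this contradicts your outline --- it is the route the cited proof takes --- but to stand alone it would have to either carry out the window estimates or cite them with precision.
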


Taking into account the proof of  Theorem \ref{thm:B}, for $K_\omega>0$ large enough, we know that conditions (i) -- (iii) of Proposition \ref{OW_review} hold. 
Therefore, we conclude that for every $\alpha>0$  small and for every $\hat{a}\in \Delta(\lambda, \alpha)$ close to $a^\star$, there exists a sequence $(a_n)_{n\in \NN}$ converging to $\hat{a}$ for which $h_{a_n}$ admits a superstable sink. 
By \eqref{sequence2}, for $n\in \NN$,  we have  $$\lambda_n= \exp \left(\frac{a_n-2n\pi}{K_\omega}\right).$$ 
It is easy to see that $\dpt \lim_{n\rightarrow +\infty}\lambda_n=0$. For this sequence of values, the flow of $f_{\lambda_n}$   has a superstable 2-periodic orbit. The way this periodic orbit, which is a critical point for $h_a$, is obtained is illustrated in Figure \ref{periodic_orbit}.

\section{Mechanism to create rank-one attractors: \\a geometrical interpretation}
\label{s:mechanism}
This manuscript serves to make a bridge between the theory of rank-one attractors developed in \cite{WY2001, WY, WY2003} and the theory of heteroclinic bifurcations involving saddle-foci or periodic solutions \cite{LR, LR2016, MO15, Ott2008, WO, Wang_2016}. In addition, we exhibit a mechanism to construct \emph{rotational horseshoes} (see \S \ref{ss: rotational horseshoe}) and \emph{strange attractors}.  This work may be seen as  the natural extension of \cite{Rodrigues2019}, where a two-parameter family of vector fields has been considered to model  bifurcations associated to certain unfoldings of Hopf-zero singularities \cite{BIS, Gaspard}. 
\medbreak
 \begin{figure}[h]
\begin{center}
\includegraphics[height=7cm]{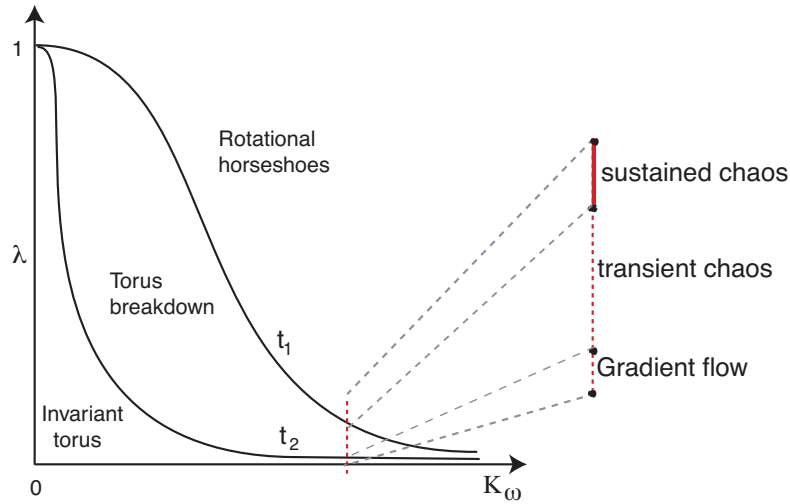}
\end{center}
\caption{\small   Adapted bifurcation diagram of \cite{Rodrigues2019} for the family \eqref{general2.1}: below the graph of $t_2$, the dynamics is governed by an attracting invariant 2-dimensional torus. Above the graph of $t_1$, one observes rotational horseshoes.  In between, rotational horseshoes exist but they may be not observable in numerics;  the dynamics is probably dominated either by a sink or bistability between a sink and an attracting torus. }
\label{scheme3}
\end{figure}
In this section, we compare and discuss our results with previous works in the literature.
Borrowing the arguments used in \cite{Rodrigues2019}, we may draw, in the first quadrant,  two smooth curves, the graphs of $t_1$ and $t_2$ drawn in Figure \ref{scheme3}, such that:
\begin{enumerate}
\medbreak
\item for all $\omega \in \RR^+_0$, we have $t_2(\omega)\leq t_1(w)$;
\medbreak
\item the region below the graph of $t_2$  corresponds to flows having an invariant and attracting torus with zero topological entropy  (attracting invariant torus);
\medbreak
\item the region between the graphs of $t_1$ and $t_2$ corresponds to flows having rotational horseshoes, which might not be observable in numerical simulations;
\medbreak
\item for $K_\omega$ sufficiently large, the region above the graph of $t_1$ correspond to flows exhibiting  strange attractors. 
\end{enumerate}

\subsection{From an invariant curve to a strange attractor}
Forgetting temporarily its connection to  equation \eqref{circle map}, we might  think of $$h_a(x)=x+\xi+a+K_\omega \ln (\Phi_2(x,0)), $$
$x\in \EU^1$ and $ a  \sim - K_\omega \ln \mu \pmod{2\pi}$,  as an abstract circle
map. 
The emergence of this map in the first return map to a Bykov network $\Gamma$ is expected.  The rotation forced by the existence of saddle-foci causes a trajectory to be thrown out in all directions in the associated eigenspace on successive approaches to the equilibria \cite{AC}.  Several types of behaviours are known to
be typical for $h_a$:
\medbreak
\begin{description}
\item[(a)] If $h_a$ is a diffeomorphism, the classical theory of Poincaré and Denjoy
may be applied. We point out a resemblance between
$h_a$ and the  family of circle maps studied in \cite{Aronson}. Because of strong normal contraction, invariant curves are shown to exist independent of
rotation number. 
\medbreak

\item[(b)] If $h_a$ is non-invertible, in general the rotation number of a given initial condition is not unique and hence an interval of rotation numbers may exist, which implies chaos \cite{MT}.  According to Section 3 of \cite{WY}, two types of dynamical behaviours are known to be \emph{prevalent}:  
\begin{description}
\item[(b1)] maps with sinks or
\item[(b2)] maps with absolutely continuous invariant measures. 
\end{description}
\end{description}
\medbreak 


 The passage of Cases \textbf{(a)} and \textbf{(b1)}  in one dimension to uniform hyperbolicity in two dimensions is
relatively simple. 
The passage of the   case \textbf{(b2)} to two dimensions is the core of \cite{WY2001} and has been  revisited in Section \ref{rank_one}  of the present paper. 

\subsection{Phenomenological description}
\label{ss:at}
 For $\lambda, K_\omega$ small enough, the flow of (\ref{general2.1}) has an attracting  hyperbolic torus, which persists under small smooth perturbations.  This is consistent with the results stated in \cite{AS91, AHL2001} about the existence of an attracting curve for the map $\mathcal{F}_\lambda$. The initial deformation of $\eta$ (see \eqref{eqeta}) on the curve $\dpt \In(O_1) \, \cap \, \Phi_2(x,0)$ is suppressed by the contracting force, and the attractor is a non-contractible closed curve obtained by applying the  \emph{Afraimovich's Annulus Principle} \cite{AHL2001}.   
 
 For a fixed $\lambda>0$, if the twisting number $K_\omega$ increases, points at $\In(O_1) \cap \Phi_2(x,0)$ at different distances from $W^s(O_1)$  rotate at different speeds. The   attracting curve starts to disintegrate into a finite collection of periodic saddles and sinks; this phenomenon 
is part of the \emph{torus-breakdown theory} \cite{AS91} and occurs within an \emph{Arnold tongue}  \cite{Aronson}.  Once  the rotational horseshoes   develop, they persist and  correspond to what the authors of \cite{WY} call \emph{transient chaos}.   The \emph{rotational horseshoes} found in \cite{Rodrigues2019}  and the associated invariant manifolds are (in general) not observable in numerical  simulations because they form a set of Lesbegue measure zero; almost all solutions will be trapped by a \emph{sink}.

  \begin{figure}[h]
\begin{center}
\includegraphics[height=8cm]{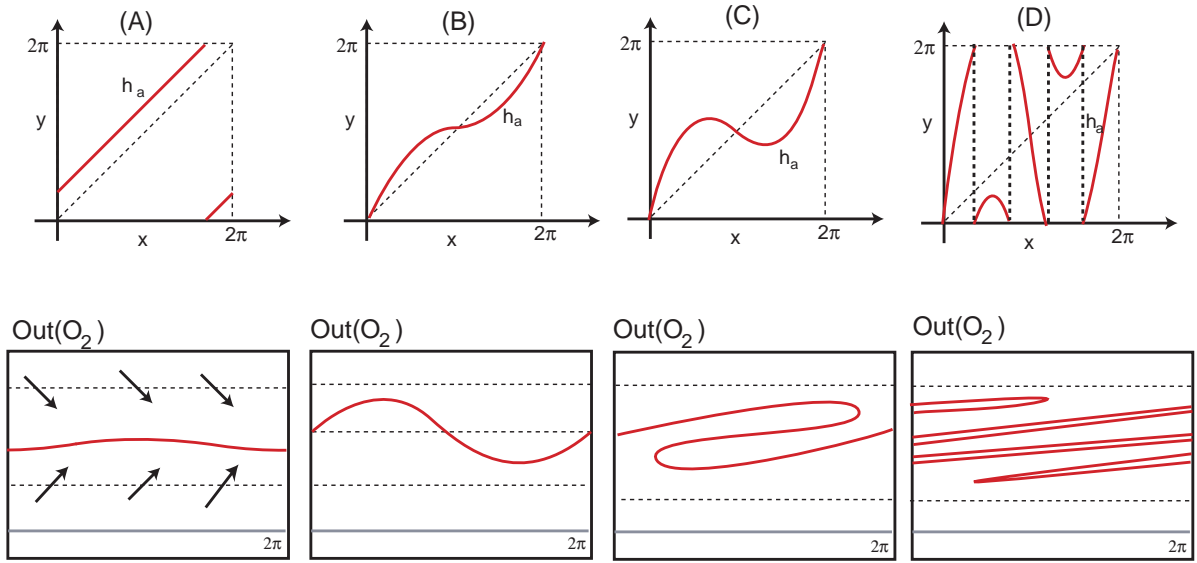} 
\end{center}
\caption{\small  Illustration (based on plots obtained by using \emph{Maple}) of $h_a(x) \pmod{2\pi}$ for $a=0.1$, $\xi=0$ and $\Phi_2(x,0)=\sin x$ ($\Rightarrow q=2$). (a) $K_\omega \gtrsim0$ -- attracting torus. (b) $K_\omega=0.5$ -- attracting torus inside a resonance wedge with a saddle and a sink. (c):  $K_\omega=2$ -- torus breakdown.  (d): $K_\omega=5$ -- mixing properties (``big lobe'' of  \cite{SST}).  }
\label{scheme2}
\end{figure}

As the twisting number $K_\omega$ gets larger, the  deformation on the curve $\In(O_1) \cap \Phi_2(x,0)$ is exaggerated further, giving rise to rank-one attractors created by stretch-and-fold type actions  -- \emph{sustained chaos}  \cite{WY}. Iterates do not escape and wander around the full torus (``large'' strange attractors).  These two phenomena (stretch-and-fold) are due to the network \emph{attracting} features combined with the presence of \emph{complex eigenvalues}, which force a  strong distortion.

\subsection{Interpretation of Figure \ref{scheme2}}

 The evolution of $h_a(x)=x+\xi+ a+K_\omega \ln (\Phi_2(x,0))$, when $\xi, \lambda>0$ are fixed and $K_\omega$ varies, is suggested in Figure \ref{scheme2}. In (A) and (B), we deduce the existence of an invariant curve (a torus in the flow of \eqref{general2.1}). The existence of periodic orbits within the torus depends on the rotation number. 
In case (B) we observe the existence of the two fixed points, suggesting that the chosen parameters are within a resonant tongue. In (C), the map $h_a$ is not a diffeomorphism meaning that  the invariant curve broke up; it corresponds to the point when the unstable manifold of the saddle (in the ``Arnold tongue'' \cite{Aronson, Rodrigues2021b}) turns around.  
In case (D), Property \textbf{(H7)} holds, meaning that the unstable manifold of a saddle within the ``Arnold tongue'' crosses each leaf of the stable foliation of the saddles of the torus's ghost.  This corresponds to what Turaev and Shilnikov \cite{TS1986} call a \emph{big lobe}. This non-negative quantity determines the magnitude between the given minimal value $h_a$ and the preceding maximal one. For $a>0$ fixed, as the twisting number increases, the critical points of $h_a(c^{(1)})$ and $h_a(c^{(2)})$  move in opposite directions at rate $\frac{1}{K_\omega}$ \cite{WY, WY2003, WY2008}.
\medbreak

\subsection{Concluding remarks}
\label{s:Discussion2}
This article studies global bifurcations associated to the one-para\-meter family $\dot{x}=f_{\lambda}(x)$, defined on a 3-dimensional sphere and satisfying Properties \textbf{(P1)--(P8)}. For $\lambda=0$, there is an attracting network containing a 2-dimensional connection (continua of solutions \cite{AC}) associated to two saddle-foci with different Morse indices, whose  invariant manifolds do not intersect for $\lambda>0$. 
 These  2-dimensional manifolds are forced to fold uniformly   within the network's basin of attraction.

 The novelty of this article is that, assuming that the \emph{twisting number} $K_\omega$ is large enough, rank-one attractors arise and wind around the full 2-dimensional torus whose existence is stated in Theorem B of \cite{Rodrigues2019}.   
The 2-dimensional invariant  manifold $W^u(O_2)$  plays the essential role in the construction of the global attractor where the SRB measure    is supported.  The strange attractors in the present paper are nonuniformly hyperbolic and structurally unstable (they are not \emph{robustly transitive} as the geometric Lorenz model).

This work is the natural continuation of \cite{Rodrigues2019}. 
Using the powerful theory of Wang and Young, we  formulate checkable hypotheses (in terms of the vector field \eqref{general2.1}) under which the first return map to global cross section  admits a strange attractor and superstable periodic orbits. The chaos is
observable (it has a strange attractor), ``large'' (is not confined to a small portion of the phase space)  and abundant (condition \eqref{abunda2} holds in the space of parameters).  As far as we know, it is the first time that this theory is applied to an autonomous family. 

Results in this manuscript go further in the analysis of unfoldings of the heteroclinic attractors. These families may behave  periodically, quasi-periodically or chaotically, depending on specific character of the perturbation. Techniques used in the present article are valid for more general heteroclinic networks containing 2-dimensional connections that are pulled apart an also  in the context of periodically perturbed networks. The study of the ergodic consequences of this dynamical scenario is the natural continuation of the present work.

\section*{Acknowledgements}
The  author is grateful to Isabel Labouriau for the fruitful discussions. The author is indebted to the reviewer for the corrections and suggestions which helped to improve the readability of this manuscript.

\appendix

\end{document}